\date{\today}
\def\cor#1{{\color{black}#1}}
\date{}
\title{Simple quadrature rules for a nonparametric nonconforming quadrilateral elements
}\author{Kanghun Cho\thanks{
 Samsung Fire \& Marine Insurance Co., Ltd., 14, Seocho-daero 74-gil, Seocho-gu, Seoul 06620,
         Korea;
E-mail: serein@snu.ac.kr
}~~~ Dongwoo Sheen\thanks{Department of Mathematics,
         Seoul National University,
         Seoul 08826,
         Korea
E-mail: sheen@snu.ac.kr
}}
\numberwithin{equation}{section}
\def\and{\quad\text{and}\quad}
\def\barx{\bar{x}}
\def\bbb{\mathbf{\bar{b}}}
\def\bbe{\bar{\mathbf{e}}}
\def\bbg{\bar{\mathbf{g}}}
\def\bbm{\bar{\mathbf{m}}}
\def\bbx{\bar{\mathbf x}}
\def\bcA{\bar{\mathcal A}}
\def\bd{\mathbf d}
\def\bh{\bar{h}}
\def\bkap{\boldsymbol{\kappa}}
\def\bm{\mathbf m}
\def\bv{\mathbf v}
\def\bv{\mathbf{ v}}
\def\bv{\mathbf{v}}
\def\bx{\bar{x}}
\def\bxi{\boldsymbol \xi}
\def\bx{\mathbf x}
\def\bx{\mathbf{x}}
\def\cA{\mathcal{A}}
\def\cE{\mathcal{E}}
\def\cF{\mathcal{F}}
\def\cQ{\mathcal Q}
\def\cS{\mathcal{S}}
\def\dbh{\bar{\bar{h}}}
\def\dbbh{\bar{\bar{\mathbf h}}}
\def\den{\operatorname{D}}
\def\div{\nabla\cdot\,}
\def\div{\nabla\cdot}
\def\dx{\,\operatorname{d}\mathbf x}
\def\foreach{\,\forall\,}
\def\forany{\,\forall\,}
\def\grad{\nabla\,}
\def\hbx{\hat{\mathbf{x}}}
\def\hbx{\widehat{\mathbf{x}}}
\def\hx{\hat{x}}
\newcommand{\jump}[2]{\left[\left[#1\right]\right]_{#2}}
\def\kap{\kappa}
\def\<{\left\langle}
\def\mbm{\mathbf m}
\def\mbR{\mathbb R}
\def\mbx{\mathbf x}
\def\mbx{\mathbf{x}}
\def\mcF{\mathcal F}
\def\NC{\mathcal{NC}}
\def\num{\operatorname{N}}
\def\O{\Omega}
\def\p{\partial}
\def\qq{\qquad}
\def\q{\quad}
\def\>{\right\rangle}
\def\sig{\sigma}
\def\Span{\operatorname{Span}}
\def\sq{\mathscr{Q}}
\def\suchthat{\,\mid\,}
\def\Tau{\mathcal{T}}
\def\tbs{\tilde{\mathbf{s}}}
\def\tbx{\tilde{\mathbf{x}}}
\def\til{\widetilde}
\def\ts{\widetilde s}
\def\tx{\tilde{x}}
\def\tx{\widetilde{x}}
\def\cV{\mathcal V}
\newcommand{\bal}{\begin{aligned}}
\newcommand{\be}{\begin{eqnarray}}
\newcommand{\bes}{\begin{eqnarray*}}
\newcommand{\bs}{\begin{subeqnarray}}
\newcommand{\bss}{\begin{subeqnarray*}}
\newcommand{\eal}{\end{aligned}}
\newcommand{\ee}{\end{eqnarray}}
\newcommand{\ees}{\end{eqnarray*}}
\newcommand{\eq}[1]{\begin{eqnarray}\label{#1}}
\newcommand{\es}{\end{subeqnarray}}
\newcommand{\ess}{\end{subeqnarray*}}
\newcommand{\nn}{\nonumber}
\newcommand{\qe}{\end{eqnarray}}
\newtheorem{theorem}{Theorem}[section]
\newtheorem{definition}[theorem]{Definition}
\newtheorem{example}[theorem]{Example}
\newtheorem{lemma}[theorem]{Lemma}
\newtheorem{proposition}[theorem]{Proposition}
\newtheorem{remark}[theorem]{Remark}
\renewcommand{\hat}{\widehat}
\renewcommand{\tilde}{\widetilde}
\begin{document}
\maketitle
\begin{abstract}
  We introduce simple quadrature rules for the
  family of nonparametric nonconforming quadrilateral element with
  four degrees of freedom. Our quadrature rules are motivated by the
  work of Meng {\it et al.} \cite{meng2018new}. First, we introduce a
  family of MVP (Mean Value Property)-preserving four DOFs
  nonconforming elements
  on the intermediate reference domain introduced by Meng {\it et
    al.}. Then we design two--points and three--points quadrature
  rules on the intermediate reference domain.
  Under the assumption on equal quadrature weights, the deviation from the
  quadrilateral center of the Gauss points for the two points and
  three points rules assumes the same quadratic polynomials with
  constant terms modified. Thus, the two--points rule and three--points
  rule are constructed at one stroke. The quadrature rules are
  asymptotically optimal as the mesh size is sufficiently small.
  Several numerical experiments are carried out, which show efficiency
  and convergence properties of the new quadrature rules.
\end{abstract}


\section{Introduction}
Sevearal effects of numerical integration have been studied in various
aspects. In particular, the Gaussian quadrature rules for general
domains goes back to the monograph by
Stroud and Secrest \cite{stroud1966gaussian}, and
Herbold {\it et al.} \cite{herbold1969effect,
  herbold1971effect} 
reported extensive analysis for the effects of numerical integration
of variational equations.
Ciarlet and Raviart \cite{ciarlet1972combined} investigated the
numerical effects in finite element methods and Ciarlet
\cite{ciarlet1991finite} describes in detail the numerical quadrature effects in
approximating finite element methods.
More recently, such effects were extensively investigated
in the $p$--version finite elements by Banerjee and Suri
\cite{banerjee1992effect}, and in the approximation of eigenvalues
by Banerjee and Osborn \cite{banerjee1989estimation, osborn1990estimation}. More recently
Banergee and Babu{\v{s}}ka {\it et al.}\cite{banerjee1989estimation, babuska2011effect} studied such effects in the approximation
of linear functionals including eigenvalue approximations.

In the meanwhile, it has been
well-known that the lowest degree conforming finite element pairs
lead to unstable numerical solutions in the numerical
simulation of fluid and solid mechanics. 
A proper choice of nonconforming finite element spaces in the
approximation of vector variables heals such
kind of instability \cite{crouzeix-raviart, fortin-soulie-quad-nc-2d,
  fortin3d, han84, linke2016robust, rannacher-turek, turek, cdy, cdssy} in the
approximation of incompressible fluid flows. Contrary to the
simplicial nonconforming elements, most quadrilateral nonconforming
elements contain extra polynomials to $P_k$
\cite{achchab2014simple, achchab2018new, arbogast1995implementation,
  dssy-nc-ell, han84, linke2016robust, rannacher-turek, turek, zhou2016new} which require
additional quadrature points, although there are some
quadrilateral elements consisting of $P_k$ only
\cite{park-sheen-p1quad, altmann-carstensen, zeng2020optimal}.
In our paper, we limit our interest to 4-DOFs
quadrilateral nonconforming elements of lowest order.
In \cite{rannacher-turek}, Rannacher and Turek introduced the rotated $Q_1$
nonconforming elements consisting of $P_1(\hat K)\oplus
\Span\{\hx_1^2-\hx_2^2\}$
on the reference domain $\hat K=[-1,1]^2$
with two types of degrees of freedom:
(1) the four mean edge integral DOFs and 
(2) the four edge-midpoint value DOFs. The two types of DOFs lead to
different numerical results.  The use of
edge-midpoint values is cheaper and simpler than that of
mean edge integral values in calculating the basis functions.
Douglas {\it et al.} modified the Rannacher--Turek element by
replacing quadratic polynomial to a quartic polynomial, where
the two types of DOFs are identical on rectangular meshes
\cite{dssy-nc-ell}. We will call the element \cite{dssy-nc-ell} as the
DSSY element, which fulfills the property $\frac1{|e|}\int_e \phi\,ds =
\phi(m),$ for every edge $e$ and its midpoint $m,$ which will be 
coined as MVP (Mean value Property) throughout the paper.
For truly quadrilaterals, a class of nonparametric
DSSY element \cite{jeon2013class} was introduced.

Recently, an interesting observation was made for quadrature rules for
nonconforming quadrilateral element by Meng, Cui, and Luo (hereafter,
abbreviated by MCL), and a new type of nonconforming element was
introduced in \cite{meng2018new}. The MCL element consists of
$P_1(\bar K)\oplus \Span\{\barx_1\barx_2\}$ on each MCL-type quadrilateral
$\bar K,$ which will be explained in the following section. Then a
simple effective quadrature rules with three points in $\bar K$ is
defined.
In \cite{meng2018new}, the basis functions are at most of degree two
so that the quadrature formulae of degree two is found.
However the class of DSSY finite elements contains high-order degree polynomial
bases to fulfill MVP, and thus the quadrature formula in
\cite{meng2018new} does not guarantee optimal convergence.

Our aim in this paper is to investigate whether it is possible
to define similar two-point and 
three--point quadrature rules for the class of DSSY elements. We
construct a class of nonparametric DSSY element on MCL-type
quadrilaterals. It
turns out to be possible to find a two-point rule and a three points
rules of precision 1 at one stroke under the assumption on the equal weights and
geometrically symmetric points with respect to barycenters.
We show
optimal convergence in broken energy norm under the condition that the
mesh sizes are sufficiently small.

The organization of the paper is as follows. In the next section, we
expose some notations and preliminaries, and then
briefly review some quadrilateral nonconforming elements which have
four DOFs. In Section \ref{sec:npDSSY}, we introduce a class of
nonparametric DSSY elements in quadrilaterals of the type
used by Meng {\it et al.} \cite{meng2018new}. Then in Section
\ref{sec:effects} the effects of numerical integration are analyzed.
Then we construct two-point and three-point quadrature rules
in Section \ref{sec:const-formula}. In the final section we provide
some numerical results which confirm the theories developed so far.

\section{Quadrilateral nonconforming elements}\label{sec:review-nonconform}
\subsection{Notations and Preliminaries}
Let $\Omega$ be a simply-connected polygonal domain in $\mathbb{R}^2$
and $(\mathcal{T}_h)_{h>0}$ be a family of shape regular
convex quadrilateral triangulations of $\Omega$ with
$\text{max}_{K \in T_h} \text{diam}(K)=h.$
If $D$ is a polygonal domain or a triangulation, 
denote by $\cE(D), \cE^i(D),\cE^b(D)$ the set of all edges, interior
edges,
and boundary edges, respectively, in $D;$ also
by $\cV(D), \cV^i(D),\cV^b(D)$ the set of all vertices, interior
vertices,
and boundary vertices, respectively, in $D.$ If $D=\Tau_h,$ the
notations will be simplified to $\cE_h,\cE_h^i,\cE_h^b,\cV_h,
\cV_h^i,\cV_h^b,$ and so on. For edge $e,$ denote by $\mbm_e$ the midpoint of $e.$

For a typical element $K\in \Tau_h,$
denote $\bv_j, j=1,\cdots,4,$ the four vertices of $K.$
Also denote
by $e_j$ the edge passing through $\bv_{j-1}$ and $\bv_{j}$
and by $\mathbf{m}_j$
the midpoint of $e_j$ for $j=1,\cdots,4,$ (with identification of
indices by $\mod(4)$ such as $\bv_0:=\bv_4,$ etc.)
Let $\hat K=[-1,1]^2$ be the reference cube with
$\hat\bv_1={1\choose 1},\hat\bv_2={-1\choose 1},
\hat\bv_3={-1\choose -1},\hat\bv_4={1\choose -1},$ with the four
midpoints
$\hat\bm_1={1\choose 0},\hat\bm_2={0\choose 1},
\hat\bm_3={-1\choose 0},\hat\bm_4={0\choose -1}.$
Define the linear functionals
$\sig_e^{(k)}\in (C^0(\overline D))',k=i,m,$
  for all $e\in \cE(D) $ by
\begin{eqnarray}
  \sig^{(i)}_{e}(v)=\frac1{|e|}\int_{e} v\dx;\q
  \sig^{(m)}_e(v)=v(\mbm_e)\forany v\in C^0(\bar D).
\end{eqnarray}
We also denote by $\jump{f}{e}$ the jump of $f$ across edge $e$ such
that $\jump{f}{e} = (f_k-f_j)|_e$ where $f_j$ and $f_k$ denote the
restrictions of $f$ to $K_j$ and $K_k$ where $e=K_j\cap K_k.$ If $e\in
\cE_h^b,$  $\jump{f}{e} = -f|_e.$

Let $\mcF_K$ denote an invertible bilinear map which maps $\hat K$ onto $K.$

For any open subset $\Omega$ of $\mathbb{R}^n$, denote the seminorm
and norm of the Sobolev space $W^{k,p}(\Omega)$ by
$|\cdot|_{k,p,\Omega}$ and $||\cdot||_{k,p,\Omega}$,
respectively. Also denote by $H^k(\Omega) = W^{k,2}(\Omega)$ and
abbreviate $|\cdot|_{k,p,\Omega}$ and $||\cdot||_{k,p,\Omega}$ as
$|\cdot|_{k,\Omega}$ and $||\cdot||_{k,\Omega}$.
Define the broken norms and seminorms on broken Sobolev spaces as
follows.
\begin{eqnarray*}
  |v_h|_{k,p,h} &=& \begin{cases}
    (\sum_{K\in\Tau_h} |v_h|_{k,p,K}^p)^{\frac1{p}}, &\, p \in [1,\infty),\\
    \max_{K\in\Tau_h} |v_h|_{k,\infty,K} &\, p = \infty,
  \end{cases}\quad
\|v_h\|_{k,p,h} = \begin{cases} \left[\sum_{j=0}^k
    |v_h|_{k,p,h}^p\right]^{\frac1{p}}, &\, p \in [1,\infty),\\
  \max_{j=0}^k |v_h|_{j,\infty,h} &\, p = \infty,
\end{cases}\\
  W^{k,p}(\Tau_h) &=& \{v_h\in L^p(\O)\,|\, \|v\|_{k,p,h} < \infty\}.
\end{eqnarray*}
If $p=2,$ the subindices $p$ can be omitted as usual.

\subsection{The Rannacher--Turek and DSSY nonconforming elements}
The parametric nonconforming quadrilateral elements are
defined as follows:
\begin{definition}[Parametric nonconforming
  quadrilateral finite element] \label{def:rannacher-turek}
For $k=i,m,$  define the reference element
$(\hat{K},\hat{P}^{pNC}_{\hat{K}},
\hat{\Sigma}^{pNC,(k)}_{\hat{K}})$ by
\begin{enumerate}
\item $\hat{K}=[-1,1]^2;$
\item $\hat{P}^{pNC}_{\hat{K}} =\begin{cases} P_1(\hat K)\oplus
    \Span\{\hat{x}_1^2-\hat{x}_2^2 \}, &\text{ if } NC=RT,\\
    P_1(\hat K)\oplus \Span\{\hat{x}_1^2-\hat{x}_2^2 - \frac53
    (\hx_1^4 -\hx_2^4)\},&\text{ if } NC=DSSY;
  \end{cases}$
\item$\hat{\Sigma}^{pNC,(k)}_{\hat{K}}=\{\sig_e^{(k)}\foreach e
  \in \cE(\hat K)\},$ for NC=RT or NC=DSSY.
\end{enumerate}

For $k=i,m,$ 
the global parametric Rannacher-Turek finite element spaces
\cite{rannacher-turek} and DSSY finite element spaces \cite{dssy-nc-ell}, with
NC=RT and NC=DSSY, respectively,
are defined by
\begin{equation*}
\begin{split}
  &\NC^{pNC,(k)}_{h}=\{ v \in L^2(\O) \suchthat v|_K = \hat v\circ
  F_K^{-1} \text{ for some } \hat v \in \hat P^{pNC,(k)}_{\hat K}
  \foreach K \in \Tau_h, \\
  &\q\q\q\q\q\q\q\q\q\q\q \sig_e^{(k)}(\jump{v}{e})=0 \foreach e \in \cE^i_h \},\\
&\NC^{pNC,(k)}_{h,0}=\{v \in \NC^{pNC,(k)}_{h} \suchthat \sig_e^{(k)}(v) =0 \foreach
~ e \in \cE_h^b\}.
\end{split}
\end{equation*}
\end{definition}
Notice that the main additional feature of the DSSY element is the
MVP:
\begin{equation}\label{eq:mvp}
  \sig^{(i)}(\phi) = \sig^{(m)}(\phi) \forany e\in \cE_h
  \forany \phi\in \NC^{pDSSY}_h.
\end{equation}
The nonparametric elements were introduced as follows.
\begin{definition}[Nonparametric Rannacher-Turek nonconforming quadrilateral
finite element] \label{def:rannacher-turek-nonpara}
The {\it nonparametric} nonconforming Rannacher-Turek element $(K,P_K,\Sigma_K)$
on $K$ is defined by
\begin{enumerate}
\item $K$ is a convex quadrilateral;
\item $P_K^{npRT} = P_1(K)\oplus\Span\{\xi_1^2 - \xi_2^2\},\,
  \xi_j,j=1,2,$ are the two coordinates connecting the two pairs
    of opposite edge-midpoints of $K;$
\item ${\Sigma}^{npRT}_{K}=\{\sig_{e}^{(i)}\foreach e \in \cE(K)\}.$
\end{enumerate}
  The global nonparametric Rannacher-Turek finite element space is defined by
\begin{equation*}
\begin{split}
  &\NC^{npRT}_{h}=\{ v \in L^2(\O) \suchthat v|_K \in P^{npRT}_{K}
 \foreach K \in \Tau_h,  \sig_e^{(i)}(\jump{v}{e})=0 \foreach e
 \in \cE^i_h\},\\
&\NC^{npRT}_{h,0}=\{v \in \NC^{npRT}_{h} \suchthat \sig_e^{i}(v) =0 \foreach
~ e \in \cE_h^b\}.
\end{split}
\end{equation*}
\end{definition}
For the definition of nonparametric DSSY element, we need to introduce
an intermediate quadrilateral, denoted by $\til K,$ 
which is the image of $\hat K$ under the following simple bilinear map
$$
S\hbx = \hbx + \hx_1\hx_2 \tbs
$$ where $\tbs=A_K^{-1}\bd,
A_K = \frac14\left(\bv_1-\bv_2-\bv_3+\bv_4,
  \bv_1+\bv_2-\bv_3-\bv_4\right),
\bd = \frac14(\bv_1-\bv_2+\bv_3-\bv_4).
$ Then, the bilinear map $\mathcal F_K$ mapping $\hat K$ to $K$
is represented by $\mathcal F_K(\hbx)=A_K(S_K(\hbx))+\mathbf b,$
$\mathbf b = \frac14(\bv_1+\bv_2+\bv_3+\bv_4).$ Then there exists an
invertible affine map $\til A_K$ which maps $\til K$ onto $K$ such
that $\cF_K = \til A_K\circ S_K.$
For these formula, see \cite[(2.5)]{jeon2013class}.
For the nonparametric DSSY element \cor{for arbitrary $\til
  c\in\mathbb R$}, set 
\begin{eqnarray}\label{eq:til-mu}
\tilde{\mu}(\tbx;\tilde{c})
=\til\ell_1(\tbx)\til\ell_2(\tbx)\til\cQ(\tbx;\til c),
\end{eqnarray}
where \cor{(\cite[(2.12)]{jeon2013class})}
\begin{subeqnarray*}
\til\ell_1(\tbx)&=&\tx_1-\tx_2-\ts_1+\ts_2;
\q\til\ell_2(\tbx)=\tx_1+\tx_2+\ts_1+\ts_2,\q \cor{\til r=\frac{\sqrt{6}}{5}\sqrt{\frac52-\ts_1^2-\ts_2^2}},\\
\tilde{\cQ}(\tbx;\tilde{c})&=&\left(\tx_1+\frac25\ts_2\right)^2+
\left(\tx_2+\frac25\ts_1\right)^2 - \til r^2 + \til c\left[
  (\tx_1+\frac25\ts_2)
  (\tx_2+\frac25\ts_1) + \frac6{25}\ts_1\ts_2\right].
\end{subeqnarray*}
Then the nonparametric DSSY element is defined as follows.

\begin{definition}[Nonparametric DSSY nonconforming quadrilateral
finite element] \label{def:dssy-nonpara}
The {\it nonparametric} nonconforming DSSY element $(K,P_K,\Sigma_K)$
on $K$ is defined by
\begin{enumerate}
\item $\til K=S_K(\hat K);$ 
\item $\til P_{\til K}^{npDSSY} = P_1(\til K)\oplus\Span\{\til\mu\},$
\item ${\til\Sigma}^{npDSSY}_{\til K}=\{\sig_{e}^{(i)}\foreach e
  \in \cE(\til K)\}= \{\sig_{e}^{(m)}\foreach e
  \in \cE(\til K)\}.$
\end{enumerate}
  The global nonparametric DSSY finite element space is defined by
\begin{equation*}
\begin{split}
  &\NC^{npDSSY}_{h}=\{ v \in L^2(\O) \suchthat v|_K = (\til v\circ
  \til A_K^{-1}) \text{ for some } \til v\in \til P^{npDSSY}_{\til K}
  \foreach K \in \Tau_h,\\
  &\hspace{8cm}\sig_e^{(i)}(\jump{v}{e})=0 \foreach e
 \in \cE^i_h\},\\
&\NC^{npDSSY}_{h,0}=\{v \in \NC^{npDSSY}_{h} \suchthat \sig_e^{i}(v|_{e}) =0 \foreach
~ e \in \cE_h^b \}.
\end{split}
\end{equation*}
\end{definition}
\subsection{The MCL nonconforming element}
Recently Meng {\it et al.} \cite{meng2018new} defined a nonparametric
quadrilateral element slightly differently from the above
nonparametric Rannacher-Turek element by using an
intermediate quadrilateral $\bar K.$
In order to briefly explain the notion of the intermediate
quadrilateral $\bar K$ of MCL type, the introduction of the
following line equations are useful.
\begin{figure}
 \centering
\includegraphics[width=0.50\textwidth]{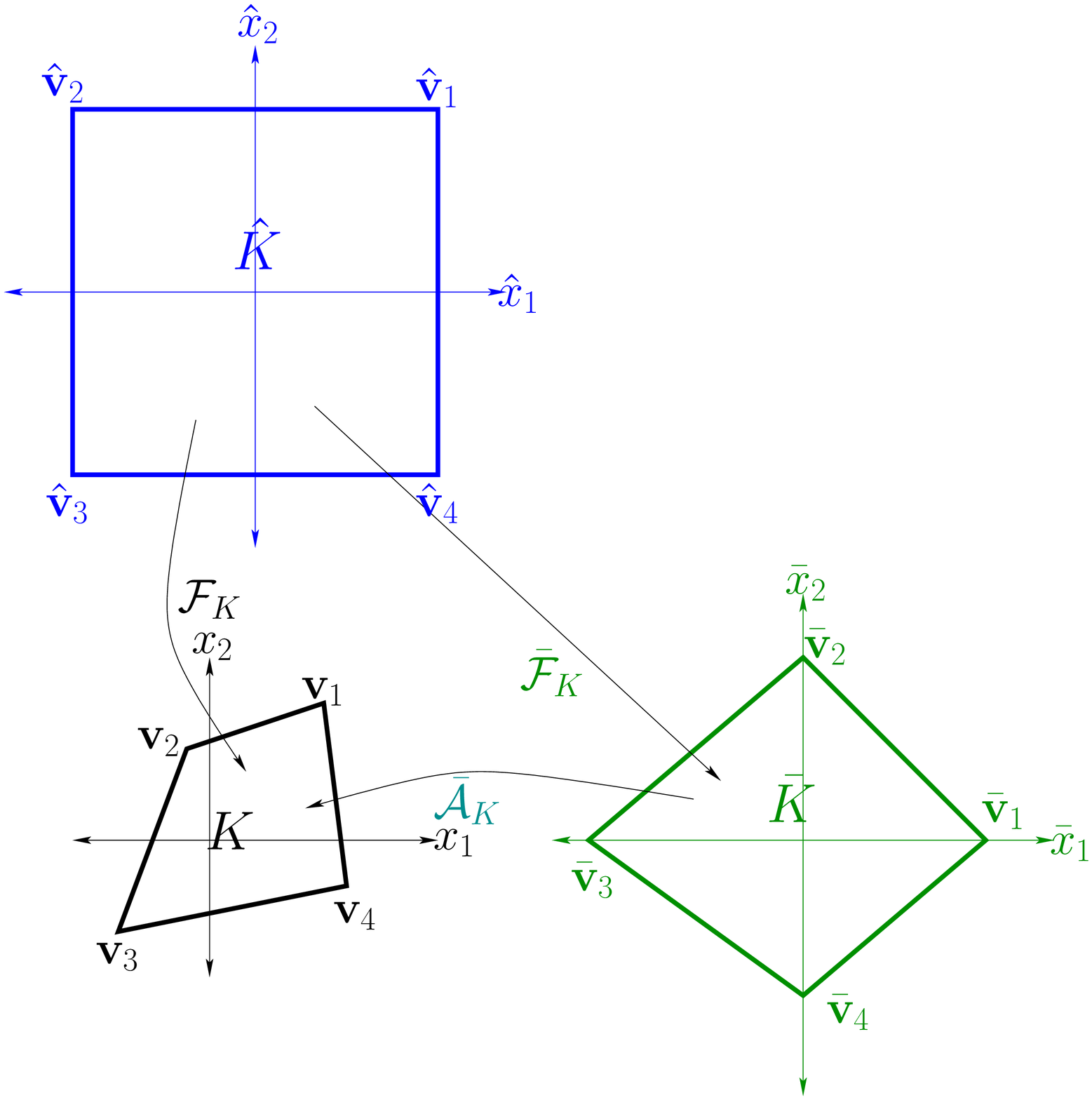}
\caption{A bilinear map $\mcF_K$ from $\hat{K}$ onto $K$,  a bilinear map
  $\bar{\mathcal F}_K$ from $\hat{K}$ onto $\bar{K}$, and an affine map $\bcA_K$ from $\bar{K}$ onto $K$.}
\label{fig:Bil}
\end{figure}
Define $\ell_j,j=1,2$ by
\begin{eqnarray}\label{eq:bell}
\ell_1(\mbx) = \frac{ (\mbx-\bv_3)\times(\bv_1-\bv_3)}{ (\bv_2-\bv_3)\times(\bv_1-\bv_3)},\q
\ell_2(\mbx) = \frac{ (\mbx-\bv_4)\times(\bv_2-\bv_4)}{ (\bv_1-\bv_4)\times(\bv_2-\bv_4)},
\end{eqnarray}
where $\mathbf a\times\mathbf b$ denotes the cross product of vectors
$\mathbf a$ and $\mathbf b.$
Then
$\ell_1(\mbx)=0$ and $\ell_2(\mbx)=0$ are the equations of lines satisfying
\begin{equation}
  \ell_1(\bv_1)=\ell_1(\bv_3)=0,
  \ell_1(\bv_2)=1;\,
  \ell_2(\bv_2)=\ell_2(\bv_4)=0,
\ell_2(\bv_1)=1.
\end{equation}
Then the intermediate quadrilateral $\bar K$ of MCL type is
defined with the following four vertices:
\begin{equation}\label{convex_poly}
\bar{\bv}_1={1\choose 0}, \bar{\bv}_2={0\choose 1},
\bar{\bv}_3={\bh_1\choose 0},
\bar{\bv}_4={0\choose\bh_2}, \text{ with } \bh_1 = \ell_2(\bv_3)<0,
\bh_2:=\ell_1(\bv_4)<0.
\end{equation}
We will use $(x_1, x_2)$ and $(\bar x_1,\bar x_2)$ for the
notations for coordinates in $K$ domain and $\bar K$ domain, respectively.
The following proposition is useful for our analysis.
\begin{proposition}
  Let  $\bcA_K$ be an affine map defined by
\begin{eqnarray}\label{eq:cAK-2}
  \bcA_K\bbx = \begin{bmatrix} \bcA_K^{(1)} &
\bcA_K^{(2)} \end{bmatrix}\bbx + \bxi,
\end{eqnarray}
where the three column vectors are defined by
\begin{eqnarray}\label{eq:cAK-3}
\bcA_K^{(1)} = \frac{\bv_1-\bv_3}{1-\bh_1},\quad
\bcA_K^{(2)} = \frac{\bv_2-\bv_4}{1-\bh_2},\quad
\bxi= \frac{\bv_3-\bh_1\bv_1}{1-\bh_1} =
  \frac{\bv_4-\bh_2\bv_2}{1-\bh_2}.
\end{eqnarray}
Then $\bcA_K: \mbR^2 \to \mbR^2$ is an affine map which maps $\bar K$
onto $K.$
Moreover, the affine map
$\bcA_K^{-1}: K\to \bar K$ by
\begin{eqnarray}\label{eq:cAK-31}
{\bar x_1\choose\bar x_2} = \bcA_K^{-1}\mbx =
{ \ell_2(\mbx) \choose\ell_1(\mbx)}.
\end{eqnarray}
maps $K$ onto $\bar K$ with four corresponding vertices.
\end{proposition}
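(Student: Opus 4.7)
The plan is to verify both assertions by direct computation on the four vertex correspondences, using the fact that an affine map of $\mathbb{R}^2$ is determined by its values on any three noncollinear points. Before anything else, I would confirm the consistency of the two expressions offered for $\bxi$ in \eqref{eq:cAK-3}, so that $\bcA_K$ is even well defined. The coefficients in $\frac{\bv_3-\bh_1\bv_1}{1-\bh_1} = \frac{-\bh_1}{1-\bh_1}\bv_1 + \frac{1}{1-\bh_1}\bv_3$ sum to $1$, so this point is an affine combination lying on the diagonal $\overline{\bv_1\bv_3}$. Applying $\ell_2$, which is affine with $\ell_2(\bv_1)=1$ and $\ell_2(\bv_3)=\bh_1$, yields $\frac{-\bh_1+\bh_1}{1-\bh_1}=0$, so this same point also lies on $\overline{\bv_2\bv_4}$. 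A symmetric argument shows $\frac{\bv_4-\bh_2\bv_2}{1-\bh_2}$ lies on both diagonals as well, and since $K$ is a nondegenerate convex quadrilateral its diagonals meet in a single point, forcing the two formulas for $\bxi$ to agree.

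With $\bxi$ well defined, I would verify $\bcA_K(\bar{\bv}_j)=\bv_j$ for $j=1,\dots,4$ by straight substitution into \eqref{eq:cAK-2}--\eqref{eq:cAK-3}. For instance, $\bcA_K(\bar{\bv}_1)=\bcA_K^{(1)}+\bxi=\frac{\bv_1-\bv_3+\bv_3-\bh_1\bv_1}{1-\bh_1}=\bv_1$ and $\bcA_K(\bar{\bv}_3)=\bh_1\bcA_K^{(1)}+\bxi=\bv_3$, with $j=2,4$ handled identically using the second form of $\bxi$. The linear part of $\bcA_K$ has columns proportional to the diagonals $\bv_1-\bv_3$ and $\bv_2-\bv_4$, which are linearly independent for a nondegenerate convex quadrilateral, so $\bcA_K$ is an affine bijection of $\mathbb{R}^2$ that sends the four vertices of $\bar K$ to the four vertices of $K$, and hence carries the convex hull $\bar K$ onto $K$.

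For the inverse formula \eqref{eq:cAK-31}, I would observe that $\mbx\mapsto(\ell_2(\mbx),\ell_1(\mbx))$ is affine by virtue of \eqref{eq:bell}. Evaluating at the vertices of $K$ and using the defining normalizations of $\ell_1,\ell_2$ together with $\bh_1=\ell_2(\bv_3)$ and $\bh_2=\ell_1(\bv_4)$ gives $\bv_j\mapsto\bar{\bv}_j$ for each $j$. Hence this affine map agrees with $\bcA_K^{-1}$ (which exists by the previous step) at four points, three of which are noncollinear, and so they coincide on all of $\mathbb{R}^2$. The main technical hurdle is the very first step: equality of the two formulas for $\bxi$ is genuinely a projective fact about the diagonals of a convex quadrilateral, and once it is packaged through the coordinate functions $\ell_1,\ell_2$, everything else collapses to bookkeeping with affine combinations of the four vertices.
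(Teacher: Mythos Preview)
Your proof is correct and follows essentially the same route as the paper's. The paper begins by \emph{defining} $\bxi$ geometrically as the intersection of the two diagonals and then derives the formulas in \eqref{eq:cAK-3}, whereas you start from the formulas and verify (via $\ell_1,\ell_2$) that each expression lands on both diagonals; these are the same argument run in opposite directions. Your treatment is in fact slightly more complete: the paper's proof stops after checking $\bcA_K\bar{\bv}_j=\bv_j$, while you also verify the inverse formula \eqref{eq:cAK-31} explicitly by evaluating $(\ell_2,\ell_1)$ at the four vertices and invoking uniqueness of an affine map on three noncollinear points.
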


\begin{proof}
  Let $\bxi$ be the intersection of the line passing through
  $\bv_1$ and $\bv_3$ and the line passing through
  $\bv_2$ and $\bv_4.$ Then,
  we have
  \begin{eqnarray}
    \bh_1(\bv_1-\bxi) = (\bv_3 -\bxi),\q
    \bh_2(\bv_2-\bxi) = (\bv_4 -\bxi),
  \end{eqnarray}
  from which it follows that
    \begin{eqnarray}\label{eq:bxi}
      \bxi = \frac{\bv_3-\bh_1\bv_1}{1-\bh_1} = \frac{\bv_4-\bh_2\bv_2}{1-\bh_2}.
    \end{eqnarray}
    Set $\bcA_K^{(j)}= \bv_j - \bxi,j =1,2.$ Then, owing to
    \eqref{eq:bxi}, the column vectors and the shift
    vector
    $\bxi$ in \eqref{eq:cAK-2} are given as in \eqref{eq:cAK-3}.
    It is immediate to verify that $\bcA_K\bar\bv_j = \bv_j,j=1,\cdots,4.$    
  \end{proof}

Then the nonparametric MCL nonconforming quadrilateral element 
is defined as follows.
\begin{definition}[Meng-Cui-Luo nonconforming quadrilateral element]
   \label{def:meng-cui-luo}
Define the intermediate reference element
$(\bar{K},\bar{P}_{\bar{K}}, \bar{\Sigma}_{\bar{K}})$ by
(1) $\bar{K}$ is a typical quadrilateral of MCL type;$\q$
(2) $\bar{P}_{\bar{K}}^{npMCL}=P_1(\bar K)\oplus \Span\{\bar{x}_1\bar{x}_2 \};\q$
(3) $\bar{\Sigma}^{npMCL}_{\bar{K}}=\{\sig_e^{(i)},\,e\in\cE(\bar K)\}.$

Then the global nonparametric MCL nonconforming quadrilateral
element space can be defined as follows:
\begin{eqnarray*}
  &\NC^{npMCL}_{h}=\{ v \in L^2(\O) \suchthat v|_K = \bar v\circ\bar
    A_K^{-1}\text{ for some } \bar v\in \bar P^{npMCL}_{\bar K}
    \foreach K \in \Tau_h,\\
  &\hspace{8cm} \sig_e^{(i)}(\jump{v}{e})=0 \foreach e \in \cE^i_h \},\\
&\NC^{npMCL}_{h,0}=\{v \in \NC^{npMCL}_{h} \suchthat \sig_e^{i}(v) =0 \foreach
~ e \in \cE_h^b \}.
\end{eqnarray*}

\end{definition}
\begin{remark} In \cite{meng2018new} the polynomial space
  $P_K$ is defined as 
  $P_K = \Span\{1,x_1,x_2,\ell_1(\mbx)\ell_2(\mbx)\}.$ But, due to the
  definitions of $\ell_1, \ell_2,$ and $\bcA_K,$
  it is evident to see the two definitions of $P_K$ are identical.
\end{remark}
\begin{remark} The Rannacher-Turek element is defined by using the
  reference element, and the basis function space $P_K$ in Definition \ref{def:rannacher-turek} consists of
  $P_1(K)\oplus \Span\{\hat\phi_4\circ \mcF_K^{-1}\},$ with
  $\hat\phi_4(\hat\mbx) = \hat x_1^2-\hat x_2^2.$
Since the components of $\mcF_K^{-1}$ are not polynomials,
  the function
  $\hat\phi_4\circ \mcF_K^{-1}(\mbx)$ is not a polynomial. In
  contrast, the MCL finite element space
  $P_K$ in  Definition \ref{def:meng-cui-luo} consists of quadratic
  polynomials since $\bcA_K^{-1}(\mbx)$ is an affine map.
\end{remark}

\begin{remark}
  The convexity condition on $K$ is that
\begin{equation}
 \bh_1 <0, \:  \bh_2 <0.
\end{equation}
\end{remark}

\section{Nonparametric DSSY elements in MCL-type domains}\label{sec:npDSSY}
In this section, we use the intermediate domain $\bar K$ of
MCL type to design a new nonconforming quadrilateral element
with MVP.

Similarly to \eqref{eq:til-mu} as in \cite{jeon2013class},
we aim to find a quartic polynomial $\bar{\mu}$
in $\bar{K}$ as follows:
\begin{equation}\label{eq:bar-mu}
\bar{\mu}(\bbx)=\bar\ell_1(\bbx) \bar\ell_2(\bbx)\bar{\sq}(\bbx),
\end{equation}
where
${\bar\ell}_1(\bbx) = \bar x_2, \bar{\ell}_2(\bbx) = \bar x_1,$ and
$\bar{\sq}(\bbx)$ is a suitable quadratic polynomial. (Recall that the
 coordinate indices for $\bar\ell_j$ and $\bar x_k$ are switched as in \eqref{eq:cAK-31}.)
We seek a quartic
polynomial $\bar{\mu}(\bbx)$ fulfilling the
MVP \eqref{eq:mvp} in $\bar{K}$.
 Denote by
$\bar{\mathbf{d}}_j:=\frac{\bar{\mathbf{v}}_{j+1}-\bar{\mathbf{v}}_{j}}{2}$
and $\bar{\mathbf{m}}_j$
the edge vector of $\bar{\mathbf{e}}_j$ and its midpoint
for $j=1,\cdots,4$ (with indices modulo 4).
An application of
the three-point Gauss quadrature formula:
\begin{eqnarray*}
\int_{-1}^1 f(t)~\operatorname{d}t \approx \frac89 f(0) + \frac59 (f(\xi) +
f(-\xi)),\quad  \xi =\sqrt{\frac35},
\end{eqnarray*}
which is exact for quartic polynomials,
simplifies MVP \eqref{eq:mvp} into the form
\begin{eqnarray}\label{eq:quadrature-2}
 \bar{\mu}(\bbg_{2j-1}) + \bar{\mu}(\bbg_{2j}) -2\bar{\mu}(\bbm_j) = 0, \quad j = 1, \cdots, 4,
\end{eqnarray}
where
$\bbg_{2j-1} = \bbm_j - \xi\bar{\mathbf{d}}_j,\quad
\bbg_{2j} = \bbm_j + \xi\bar{\mathbf{d}}_j, J=1,\cdots,4,
$
together with $\bbm_j,j = 1,\cdots,4,$
are the twelve Gauss points on the four edges. 
Notice that the equations of lines for edges $\bbe_j,j = 1,\cdots,4,$ are given
in vector notation as follows:
\[
\bbe_j(t) = \bbm_j + t\bar{\mathbf{d}}_j, \q t\in [-1,1].
\] 
Consider the quartic polynomial \eqref{eq:bar-mu} restricted to an edge
$\bbe_j(t), t\in[-1,1].$ Since $\bar{x}_1\bar{x}_2$ vanishes at the other two end points of each edge,
one sees that
\begin{eqnarray}\label{eq:inter-quad}
\bar{\ell}_1(\bbg_{2j-1})\bar{\ell}_2(\bbg_{2j-1}) =
\bar{\ell}_1(\bbg_{2j})\bar{\ell}_2(\bbg_{2j}) = (1-\xi^2)
\bar{\ell}_1(\bbm_{j})\bar{\ell}_2(\bbm_{j}), \quad \left(\xi = \sqrt{\frac35}\right).
\end{eqnarray}

A combination of \eqref{eq:quadrature-2} and \eqref{eq:inter-quad} yields that
\eqref{eq:mvp} holds if and only if the quadratic polynomial $\bar{\sq}$
satisfies
\begin{eqnarray}\label{eq:inter-quad-2}
 \bar{\sq}(\bbg_{2j-1}) + \bar{\sq}(\bbg_{2j}) -5\bar{\sq}(\bbm_j) = 0, \quad j = 1, \cdots, 4.
\end{eqnarray}

A standard use of symbolic calculation gives the general solution of
\eqref{eq:inter-quad-2} in the following one-parameter family in $\bar{c}$:
\begin{eqnarray}\label{eq:sq}
  \bar{\sq}(\bbx) = \bar q(\barx_1;\bh_1) + \bar{c}\bar q(\barx_2;\bh_2),\,
    \bar{c} \in \mathbb{R},\q
  \text{where }\bar q(\barx;\bh)= \barx^2 - \frac{3}{10}(1+\bh)\barx +
  \frac{3}{20}\bh.
\end{eqnarray}
Define for each $\bar{c} \in \mathbb{R},$
\[
\bar{\mu}(\bbx;\bar{c}) =
\bar{\ell}_1(\bbx)\bar{\ell}_2(\bbx)\bar{\sq}(\bbx)
= \bar{x}_1\bar{x}_2\bar{\sq}(\bbx).
\]
where $\bar{\sq}$ by \eqref{eq:sq} depending on $\bar{c}$ as well as $\bbx$.

We are now in a position to define a class of {\it nonparametric nonconforming
elements on the intermediate quadrilaterals $\bar{K}$} with four degrees of freedom as follows.
(1) $\bar{K} = \cS_K(\bar K);\q$
(2) $\bar{P}^{npDSSY}_{\bar{K}} = P_1(\bar K)\oplus \Span\{\bar{\mu}(\barx_1,\barx_2;\bar{c}) \}$\q;
(3) $\bar{\Sigma}^{npDSSY}_{\bar{K}}=\{\sig_e^{(i)},\,e\in\cE(\bar K)\}=
\{\sig_e^{(m)},\,e\in\cE(\bar K)\}.$

The global nonparametric DSSY quadrilateral nonconforming element
spaces are defined similarly.

By the above construction it is apparent that
MVP holds. Owing to this property, it is simple to show the unisolvency of the element.
\begin{theorem}\label{thm:unisol}
Assume that $\bar{c}$ is chosen such that $\bh_1^2+\bh_1+1+\bar{c}(\bh_2^2+\bh_2+1)\neq 0.$ Then 
$(\bar{K},\bar{P}_{\bar{K}}, \bar{\Sigma}_{\bar{K}} )$ is unisolvent.
\end{theorem}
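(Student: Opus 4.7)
The plan is to exploit MVP to trade the edge-integral DOFs for midpoint evaluations, and then to isolate the coefficient of $\bar\mu$ by forming a single alternating sum tailored to the rectangular arrangement of the four midpoints of $\bar K$.

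First, since $\sig_e^{(i)}(\bar p)=\sig_e^{(m)}(\bar p)$ for every $\bar p\in\bar P^{npDSSY}_{\bar K}$ by the very construction of $\bar\mu$, it is enough to show that any $\bar p=a_0+a_1\bar x_1+a_2\bar x_2+a_3\bar\mu$ with $\bar p(\bar{\mathbf{m}}_j)=0$ for $j=1,\ldots,4$ vanishes identically. The vertex list in \eqref{convex_poly} shows that the four edge midpoints are
$$
\bar{\mathbf{m}}_1=(\tfrac12,\tfrac{\bh_2}{2}),\quad
\bar{\mathbf{m}}_2=(\tfrac12,\tfrac12),\quad
\bar{\mathbf{m}}_3=(\tfrac{\bh_1}{2},\tfrac12),\quad
\bar{\mathbf{m}}_4=(\tfrac{\bh_1}{2},\tfrac{\bh_2}{2}),
$$
i.e., the four corners of the axis-aligned rectangle $[\bh_1/2,1/2]\times[\bh_2/2,1/2]$.

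I then form the alternating functional $\Lambda(\bar p):=\bar p(\bar{\mathbf{m}}_1)-\bar p(\bar{\mathbf{m}}_2)+\bar p(\bar{\mathbf{m}}_3)-\bar p(\bar{\mathbf{m}}_4)$. Because the midpoints sit at the corners of a rectangle, one checks by inspection that $\Lambda$ annihilates each of $1,\ \bar x_1,\ \bar x_2$, so $\Lambda(\bar p)=a_3\,\Lambda(\bar\mu)$. Writing $\bar\mu=\bar x_1\bar x_2\bar\sq$ and using the immediate evaluations $\bar q(1/2;\bh)=1/10$ and $\bar q(\bh/2;\bh)=\bh^2/10$ that follow from \eqref{eq:sq}, a short computation using $1-\bh^3=(1-\bh)(1+\bh+\bh^2)$ for both $\bh=\bh_1$ and $\bh=\bh_2$ yields the factorization
$$
\Lambda(\bar\mu)=-\frac{(\bh_1-1)(\bh_2-1)}{40}\big[(\bh_1^2+\bh_1+1)+\bar c\,(\bh_2^2+\bh_2+1)\big].
$$
The convexity requirement $\bh_1,\bh_2<0$ precludes $\bh_i=1$, and the hypothesis of the theorem makes the bracketed factor nonzero; hence $a_3=0$.

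With $a_3=0$, the remaining affine polynomial $\bar p=a_0+a_1\bar x_1+a_2\bar x_2$ vanishes at the three affinely independent midpoints $\bar{\mathbf{m}}_1,\bar{\mathbf{m}}_2,\bar{\mathbf{m}}_3$ (two of them differ only in the $\bar x_2$-coordinate, another pair only in the $\bar x_1$-coordinate), which forces $a_0=a_1=a_2=0$ and so $\bar p\equiv 0$. The only non-routine step is the factorization of $\Lambda(\bar\mu)$; the two summands of $\bar\sq$ contribute, respectively, the terms $(\bh_1^2+\bh_1+1)$ and $\bar c(\bh_2^2+\bh_2+1)$, and the condition in the statement is precisely the nondegeneracy of that combination.
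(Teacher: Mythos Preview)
Your proof is correct and shares the same core reduction as the paper's: both invoke the MVP to replace edge integrals by midpoint evaluations, compute $\bar\mu$ at the four midpoints via the identities $\bar q(1/2;\bar h)=1/10$ and $\bar q(\bar h/2;\bar h)=\bar h^2/10$, and reduce unisolvency to a nondegeneracy condition. The execution differs: the paper assembles the full $4\times4$ matrix $A=(\bar\psi_k(\bar{\mathbf m}_j))$ and computes $\det A=-\tfrac{1}{160}(1-\bar h_1)^2(1-\bar h_2)^2\big[(\bar h_1^2+\bar h_1+1)+\bar c(\bar h_2^2+\bar h_2+1)\big]$ directly, whereas you exploit the observation that the four midpoints are the corners of an axis-aligned rectangle and use the alternating sum $\Lambda$ to kill the affine part, isolate the coefficient of $\bar\mu$, and then finish with an easy affine-independence argument. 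Your route is a bit more structural: the alternating sum is precisely the row combination that explains \emph{why} the paper's determinant factors as it does, and it avoids any symbolic determinant calculation. The paper's direct computation, on the other hand, gives the full matrix explicitly, which it then reuses to write down the basis functions $(c_{jk})=A^{-T}$.
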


\begin{proof}
  Set
$\bar{\psi}_1(\bbx)=1,\bar{\psi}_2(\bbx)=\barx_1,\bar{\psi}_3(\bbx)=\barx_2,$
and $\bar{\psi}_4=\bar{\mu}(\barx_1,\barx_2;\bar{c}),$ and
define $A=(a_{jk}) \in M_{4\times4}(\mathbb{R})$ by
$a_{jk}= \bar\psi_k(\bbm_j), j,k=1,\cdots,4.$ Then,
  Using $\bar q(\frac12;\bh)=\frac1{10}$ and
  $\bar q(\frac{\bh}{2};\bh)=\frac{\bh^2}{10},$ we get
%
\begin{equation}\label{eq:matA}
A = \begin{bmatrix}
1 & \frac{1}{2} & \frac{1}{2}    & \frac{1}{40}(1+ \bar c)\\
1 & \frac{\bh_1}{2} & \frac12   & \frac{1}{40}\bh_1(\bh_1^2+ \bar c)\\
1 & \frac{\bh_1}{2} & \frac{\bh_2}{2}   & \frac{1}{40}\bh_1\bh_2(\bh_1^2+ \bar c\bh_2^2)\\
1 & \frac{1}{2} & \frac{\bh_2}{2}   & \frac{1}{40}\bh_2(1 + \bar c\bh_2^2)
 \end{bmatrix}
\end{equation}
with $\det(A)=-\frac{1}{160}(1-\bh_1)^2(1-\bh_2)^2 \big(\bh_1^2+\bh_1+1+\bar{c}(\bh_2^2+\bh_2+1)\big).$  Thus $A$ is nonsingular if and only if $\bh_1^2+\bh_1+1+\bar{c}(\bh_2^2+\bh_2+1)\neq 0.$
\end{proof}
From now on, we choose $\bar{c} =1$ to have
symmetry in \eqref{eq:sq}.
%
%
The basis functions can be easily constructed by using MVP. For
this,
as in the proof of Theorem \ref{thm:unisol},
set $\bar\psi_1(\barx)=1, \bar\psi_2(\barx)=\bar x_1, \bar\psi_3(\bar x)=\bar x_2,
\bar\psi_4(\bar x)=\bar\mu(\bar x,\bh)$ and seek the basis functions
\begin{eqnarray}\label{eq:basis-barphi}
\bar\phi_j(\bar x)= \sum_{k=1}^4 c_{jk}
  \bar\psi_k(\bar x),\,j=1,\cdots,4,
\text{ such that } \bar\phi_j(\bm_k)=\delta_{jk}.
\end{eqnarray}

\def\den{\operatorname{D}}
\def\num{\operatorname{N}}
\def\Nh{\operatorname{Nh}}
By computing the inverse of the matrix $A^{T}$ in \eqref{eq:matA},
we can represent the basis functions \eqref{eq:basis-barphi}
with the coefficients given by
\begin{eqnarray}
  (c_{jk}) = \frac2{(1-\bh_1)(1-\bh_2) }
\begin{bmatrix}
\frac{ \bh_1 \bh_2}{2} &
 - \num(1,2) &
 - \num(2,1) &
 \frac{20}{\den} \\
 -\frac{\bh_2}{2} &
  \num(1,2) &
  \Nh(2) &
 -\frac{20}{\den}\\
\frac12 &
 - \Nh(1)&
 - \Nh(2)&
 \frac{20}{\den}\\
 -\frac{\bh_1}{2} &
  \Nh(1)&
  \num(2,1)&
  -\frac{40}{\den}
\end{bmatrix},
\end{eqnarray}
where
$
\den = 2+\bh_1+\bh_2+\bh_1^2+\bh_2^2,\,
\num(j,k) = \frac{ (\bh_j^2 + \bh_j + \bh_k^2 + 1) \bh_k }{\den},\,
\Nh(j)=\frac{\bh_j^2 + \bh_j + 2}{\den},\, j,k =1, 2.
$
\section{Effects of numerical integration}\label{sec:effects}
Consider the following elliptic boundary problem 
\begin{subeqnarray}\label{eq:model-elliptic}
-\div \left( \bkap(x) \grad u \right) &=& f  \text{ in } \Omega,\\
u &=& 0  \text{ on }  \partial \Omega,
\end{subeqnarray}
where $\Omega \subset \mathbb{R}^n$ is a polygonal domain and $\bkap=\big(\bkap_{ij}(x)\big)$ is a
symmetric matrix with smooth functions $\bkap_{ij}\in W^{1,\infty,h}(\O)$
such that there are constants $\alpha_K>0$ such that
\begin{equation*}
\sum_{i,j=1}^n \kap_{ij}(\mbx) \xi_i \xi_j \geq \alpha_K
|\xi|^2,\,\mbx\in K\forany K\in\Tau_h.
\end{equation*}

The variational form of \eqref{eq:model-elliptic} is given by finding 
$u \in H_0^1(\Omega)$ such that
\begin{equation}\label{eq:model-weakform}
a(u,v) = F(v), \q v\in H_0^1(\Omega),
\end{equation}
where $a(u,v)= \int_\Omega \bkap\grad u \cdot \grad v dx$ and 
$F(v)=\int_\Omega fv dx.$
Consider the nonparametric DSSY space $\NC^{npDSSY}_h$ defined by
$(K,P_K, \Sigma_K)$ in the previous section.
Then the finite element approximation $u_h\in \NC^{npDSSY}_h$ of
\eqref{eq:model-weakform} is defined as the solution of discrete problem
\begin{equation}\label{eq:model-discreteform}
a_h(u_h,v_h) = F_h(v_h), \q v \in \NC^{npDSSY}_h
\end{equation}
where $a_h(u,v)= \sum_{K \in \mathcal{T}_h} \int_K \bkap\grad u \cdot
\grad v dx$ and $F_h(v)=\sum_{K \in \mathcal{T}_h} \int_K fv dx.$

The energy error estimate for nonconforming methods are as
provided in \cite{dssy-nc-ell}.
\begin{theorem}\cite{dssy-nc-ell}
Assume that $u$ and $u_h$ are the solutions of \eqref{eq:model-weakform} and \eqref{eq:model-discreteform}, respectively. Then we have following error estimate
\begin{equation}
|u-u_h|_{1,h} \leq C h |u|_{2,\Omega}.
\end{equation}
\end{theorem}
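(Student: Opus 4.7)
The plan is to invoke the Second Strang Lemma to reduce the energy estimate to bounding the approximation error and the consistency (or nonconformity) error separately. Precisely, one writes
\begin{equation*}
|u-u_h|_{1,h} \;\le\; C\!\left(\inf_{v_h\in \NC_{h,0}^{npDSSY}} |u-v_h|_{1,h} \;+\; \sup_{0\ne w_h\in \NC_{h,0}^{npDSSY}} \frac{|a_h(u,w_h)-F_h(w_h)|}{|w_h|_{1,h}}\right),
\end{equation*}
so that the result follows as soon as each of the two terms on the right is shown to be $O(h)|u|_{2,\Omega}$.

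For the approximation term I would define the canonical interpolant $\Pi_h u$ elementwise through the DOFs $\sigma_e^{(i)}$ (equivalently $\sigma_e^{(m)}$ by MVP \eqref{eq:mvp}). Pulling back by the affine map $\bar{\mathcal A}_K$ from $\bar K$ to $K$, the problem on each $K$ reduces to a Bramble--Hilbert argument on the intermediate quadrilateral $\bar K$: the interpolant reproduces $P_1(\bar K)$, hence $\|\hat v - \hat\Pi \hat v\|_{1,\bar K} \le \bar C\,|\hat v|_{2,\bar K}$, and standard affine scaling (using that $\|\bar{\mathcal A}_K\|\le C h$ and $\|\bar{\mathcal A}_K^{-1}\|\le C h^{-1}$ under shape regularity) delivers $|u-\Pi_h u|_{1,K}\le Ch\,|u|_{2,K}$. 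Summing over $K$ yields the desired global bound.

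For the consistency term I would use elementwise integration by parts:
\begin{equation*}
a_h(u,w_h)-F_h(w_h) \;=\; \sum_{K\in\Tau_h}\sum_{e\in\cE(K)} \int_e (\bkap\nabla u\cdot\mathbf n_K)\,w_h\,ds,
\end{equation*}
and then rewrite this as a sum over edges using jumps $\jump{w_h}{e}$ (noting $u\in H^2$ so $\bkap\nabla u\cdot\mathbf n$ is single-valued across interior edges). Since $\sigma_e^{(i)}(\jump{w_h}{e})=0$ for every interior edge and $\sigma_e^{(i)}(w_h)=0$ on boundary edges, one may subtract the edge mean of $\bkap\nabla u\cdot\mathbf n$ from $\bkap\nabla u\cdot\mathbf n$ without changing the sum. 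A Bramble--Hilbert argument on each edge (or on the adjacent elements via a trace inequality on $\bar K$, then scaled to $K$) gives an $O(h)$ factor against $|u|_{2,K}|w_h|_{1,K}$, which on summation and Cauchy--Schwarz produces the required estimate.

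The main obstacle is that the intermediate element $\bar K$ is \emph{not} a fixed reference domain: its shape, and the very polynomial $\bar\mu(\cdot;1)$ spanning the extra direction in $\bar P^{npDSSY}_{\bar K}$, depend on the parameters $(\bh_1,\bh_2)$ of $K$. Consequently both the interpolation stability constant $\bar C$ above and the trace/Bramble--Hilbert constants on $\bar K$ must be shown to be uniformly bounded as $K$ varies in a shape-regular family. I would handle this by observing that shape regularity of $K$ confines $(\bh_1,\bh_2)$ to a compact subset of $(-\infty,0)^2$ bounded away from the degenerate locus $\bh_1^2+\bh_1+1+(\bh_2^2+\bh_2+1)=0$ identified in Theorem \ref{thm:unisol}, so a continuity/compactness argument yields uniform constants. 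The remainder of the proof is the standard DSSY machinery of \cite{dssy-nc-ell} transplanted to the MCL-type intermediate domain.
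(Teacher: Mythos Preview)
The paper does not supply a proof of this theorem at all; it merely quotes the result from \cite{dssy-nc-ell} (the sentence immediately preceding it reads ``The energy error estimate for nonconforming methods are as provided in \cite{dssy-nc-ell}''). Your outline via the Second Strang Lemma---splitting into an approximation term handled by the canonical interpolant and a Bramble--Hilbert argument on $\bar K$, and a consistency term handled edgewise using the zero-mean jump condition---is precisely the standard DSSY argument in the cited reference, so in that sense your proposal and the paper's ``proof'' coincide.

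Two small remarks on the details you added beyond what the paper states. First, your worry about staying away from the degenerate locus of Theorem~\ref{thm:unisol} is unnecessary when $\bar c=1$: since $\bh_j^2+\bh_j+1=(\bh_j+\tfrac12)^2+\tfrac34\ge\tfrac34$ for every real $\bh_j$, the quantity $\bh_1^2+\bh_1+1+(\bh_2^2+\bh_2+1)$ is bounded below by $\tfrac32$ and never vanishes; unisolvency is automatic. Second, your compactness argument is the right mechanism for uniform constants, but you should state explicitly what shape regularity buys: it forces $(\bh_1,\bh_2)$ into a compact subset of $(-\infty,0)^2$ bounded away from both $0$ and $-\infty$ (equivalently, $\bar K$ stays uniformly non-degenerate), after which continuity of the interpolation operator and of the trace constants in $(\bh_1,\bh_2)$ gives the uniform bound.
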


Our aim in this section is to find sufficient conditions on numerical quadrature rules
in approximating the stiffness matrix and load vector based on $\NC^{npDSSY}_h$.
Notice that our nonconforming elements on $K$ are constructed
via the affine map $\bar{\cA}_K$ form the reference element $\bar{K}$ onto
$K$.
Thus it is natural to construct a quadrature formulae on $\bar{K}$,
which is defined with positive weights $\bar{\omega}_{\bar K,l}$
and nodes $\bar{\mathbf{b}}_{\bar K,l}$, $l=1,\cdots,L,$ by
\begin{equation}\label{eq:quad_ref}
\int_{\bar{K}} \bar{\phi}(\bar{\bx})d\bar{\bx} \approx \sum_{l=1}^L
\bar{\omega}_{\bar K, l} \bar{\phi}({\bar{\mathbf b}}_{\bar K, l}).
\end{equation}
Denote  by $J_{\bar{\cA}_K}$ the Jacobian matrix of $\bar{\cA}_K$,
and observe that
\begin{equation*}
\int_K \phi(\bx) d\bx = |\det(J_{\bar{\cA}_K})| \int_{\bar{K}} \bar{\phi}(\bar{\bx})d\bar{\bx}.
\end{equation*}
It induces the quadrature formulae on $K$ from \eqref{eq:quad_ref}, which is given by
\begin{equation}\label{eq:quad_physical}
\int_K \phi(\bx) d\bx \approx \sum_{l=1}^L \omega_{K,l} \phi(\mathbf{b}_{K,l}),
\end{equation}
where $\omega_{K,l} = |\det(J_{\bar{\cA}_K})|\, \bar{\omega}_{\bar K,l}$ and
$\mathbf{b}_{K,l} = \bar{\cA}_K\bbb_{\bar K,l}$.
Suppose that the discrete problem \eqref{eq:model-discreteform}
is approximated by above  quadrature formulae. Then $\bar{u}_h$, the Galerkin approximation
with quadrature, is defined as the solution of approximate problem: to
find
$\bar u_h\in \NC^{npDSSY}_h$ such that
\begin{equation}\label{eq:model-approxi}
\bar{a}_h(\bar{u}_h,\bar{v}_h) = \bar{F}_h({v}_h), \q {v}_h \in \NC^{npDSSY}_h,
\end{equation} 
where 
\begin{align}
\bar{a}_h(u,v) = \sum_{K \in \mathcal{T}_h} \omega_{K,l} (\bkap\grad u
  \cdot \grad v )(\mathbf{b}_{K,l});\q 
\bar{F}_h(v) = \sum_{K \in \mathcal{T}_h} \omega_{K,l} (fv)(\mathbf{b}_{K,l}). 
\end{align}
Define the quadrature error functionals to estimate the effect of numerical integration by
\begin{subeqnarray}\label{eq:quad-rule}
\bar{E}_{\bar K}(\bar{\phi}) &=& \int_{\bar{K}}
\bar{\phi}(\bar{\bx})d\bar{\bx} - \sum_{l=1}^L \bar{\omega}_{\bar K,l}
\phi(\bbb_{\bar K, l}), \slabel{eq:quad-rulea} \\
E_K(\phi) &=& \int_K \phi(\bx) d\bx - \sum_{l=1}^L \omega_{K,l}
\phi(\mathbf{b}_{K,l}),
\slabel{eq:quad-ruleb}
\end{subeqnarray}
where above two error functionals are related as follows:
\begin{equation*}
E_K(\phi) = |\det(J_{\bar{\cA}_K})| \bar{E}_{\bar K}(\bar{\phi}).
\end{equation*}
The following Bramble-Hilbert Lemma is essential for our argument. 
\begin{lemma}\label{lem:Bramble-Hilbert}\cite[Theorem 28.1, p.198]{ciarlet1991finite}
Let $D \subset \mathbb{R}^n$ be a domain with a Lipschitz continuous
boundary. Suppose that $L$ is a continuous linear mapping on
$W^{k+1,q}(D)$ for some integer $k \geq 0$ and $ 1\le q \le \infty$. If
 \begin{equation}
L(\phi)=0 \q \forall \phi \in P_k(D),
\end{equation} 
then there exists a constant $C(D)$ such that
\begin{equation}
|L(v)| \leq C(D) \|L\|_{k+1,q,D}' |v|_{k+1,q,D}\forany v\in W^{k+1,q}(D),
\end{equation}
where $ \|\cdot\|_{k+1,q,D}' $ denotes the norm in the dual space of $W^{k+1,q}(D).$
\end{lemma}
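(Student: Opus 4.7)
The plan is to deduce the estimate from two ingredients: the vanishing hypothesis on $P_k(D)$, which allows $L$ to be replaced by its values on cosets $v+P_k(D)$, and a Deny--Lions-type inequality on the quotient space $W^{k+1,q}(D)/P_k(D)$.

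First, I would exploit linearity. Since $L(\phi)=0$ for every $\phi\in P_k(D)$, for any $v\in W^{k+1,q}(D)$ and any $p\in P_k(D)$,
\begin{equation*}
|L(v)|=|L(v+p)|\leq \|L\|_{k+1,q,D}'\,\|v+p\|_{k+1,q,D}.
\end{equation*}
Taking the infimum over $p\in P_k(D)$ yields
\begin{equation*}
|L(v)|\leq \|L\|_{k+1,q,D}'\;\inf_{p\in P_k(D)}\|v+p\|_{k+1,q,D}.
\end{equation*}
So the entire proof reduces to showing the existence of $C(D)$ such that
\begin{equation*}
\inf_{p\in P_k(D)}\|v+p\|_{k+1,q,D}\leq C(D)\,|v|_{k+1,q,D},\qquad v\in W^{k+1,q}(D).
\end{equation*}

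Next, I would establish this Deny--Lions estimate by a quotient-space plus compactness argument. On $W^{k+1,q}(D)$ the seminorm $|\cdot|_{k+1,q,D}$ has kernel exactly $P_k(D)$ (any distribution whose derivatives of order $k+1$ all vanish is a polynomial of degree $\leq k$). Passing to the quotient $\dot W:=W^{k+1,q}(D)/P_k(D)$ endowed with the quotient norm $\|\dot v\|=\inf_{p\in P_k(D)}\|v+p\|_{k+1,q,D}$, the seminorm $|\cdot|_{k+1,q,D}$ descends to a genuine norm. The claim is that these two norms on $\dot W$ are equivalent. The nontrivial direction is by contradiction: suppose there is a sequence $\{v_m\}$ with $\|\dot v_m\|=1$ and $|v_m|_{k+1,q,D}\to 0$. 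Choosing representatives $v_m$ with $\|v_m\|_{k+1,q,D}\leq 2$ (equivalently, subtracting a suitable projection onto $P_k(D)$ such as the one defined by $\int_D\partial^{\alpha}(v-\Pi v)\,dx=0$ for $|\alpha|\leq k$), the Rellich--Kondrachov theorem applied to the Lipschitz domain $D$ yields a subsequence converging in $W^{k,q}(D)$; combined with the vanishing of the top-order seminorm and completeness, the limit is both a $P_k(D)$ representative and has unit quotient norm, a contradiction.

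Finally, combining the two steps gives the stated bound with $C(D)$ depending only on the shape of $D$.

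The main obstacle I anticipate is the compactness step: one must justify that the ball $\{v\in W^{k+1,q}(D):\|v\|_{k+1,q,D}\leq 2\}$ has the required precompactness modulo $P_k(D)$. This is where the Lipschitz regularity of $\partial D$ is essential, through the extension theorem and Rellich--Kondrachov embedding. The case $q=\infty$ is slightly delicate and typically handled by interpolation with a finite-$q$ estimate or by a direct averaged-Taylor-polynomial construction (Bramble--Hilbert's original polynomial approximation), which bypasses the compactness route altogether and gives an explicit constant depending on the chunkiness parameter of $D$.
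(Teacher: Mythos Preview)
The paper does not give its own proof of this lemma; it simply quotes the result from Ciarlet \cite[Theorem~28.1, p.~198]{ciarlet1991finite} and uses it as a black box. Your argument is the standard proof of the Bramble--Hilbert lemma (reduction via $L(v)=L(v+p)$ followed by the Deny--Lions inequality obtained through a quotient-space compactness argument), which is precisely the route taken in the cited reference, so there is nothing to compare beyond noting that your proposal matches the classical proof.
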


The following theorem estimates the effect of quadrature formulae on
the approximate bilinear form $\bar{a}_h,$ whose proof is essentially
identical to that of \cite[Theorem 28.2, p.199]{ciarlet1991finite}.
\begin{theorem}\label{thm:3.4}
Assume that $\bar{E}_{\bar K}(\bar{\phi}) = 0$ for any $\bar{\phi} \in \grad \bar{P}_{\bar{K}}$, where
\begin{equation*}
\grad\bar{P}_{\bar{K}}:=\Span\{\grad \bar{v} \; \big| \; \bar{v} \in \bar{P}_{\bar{K}} \}.
\end{equation*}
If $\bkap \in W^{1,\infty}(K),$
then there exists a constant $C>0$ such that
\begin{align}
\big| E_K(\bkap \grad u \cdot \grad v) \big|  \leq C h_K
  ||\bkap||_{1,\infty,K} |u|_{1,K} |v|_{1,K}
  \q \forall u,v \in P_K, 
\end{align}
where $h_K$ denotes the diameter of $K$.
\end{theorem}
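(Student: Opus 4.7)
The plan is to mimic the classical proof of \cite[Theorem 28.2]{ciarlet1991finite}, adapted to the intermediate-element setting: transfer the estimate from $K$ to $\bar K$ via the affine map $\bar{\cA}_K$, apply the Bramble--Hilbert Lemma \ref{lem:Bramble-Hilbert} to a functional that is linear in the matrix coefficient and vanishes on constants, and then rescale. The factor $h_K$ will emerge from the affine scaling.

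First I would pull back to $\bar K$. Writing $J := J_{\bar{\cA}_K}$, $\bar u := u\circ\bar{\cA}_K$ and $\bar v := v\circ\bar{\cA}_K$ in $\bar P_{\bar K}$, and $\bar\bkap(\bar\bx) := J^{-1}\bkap(\bar{\cA}_K\bar\bx)J^{-T}$, the chain-rule relation $\grad u = J^{-T}\grad\bar u$ together with the consistent transformation of the quadrature nodes and weights gives
\[
E_K(\bkap\grad u\cdot\grad v) \;=\; |\det J|\sum_{i,j=1}^2 \bar E_{\bar K}\bigl(\bar\bkap_{ij}\,(\partial_i\bar u)(\partial_j\bar v)\bigr).
\]
For each fixed pair $(i,j)$, I would then view $L_{ij}(\bar w):=\bar E_{\bar K}(\bar w\,(\partial_i\bar u)(\partial_j\bar v))$ as a bounded linear functional on $W^{1,\infty}(\bar K)$ and use the hypothesis $\bar E_{\bar K}\equiv 0$ on $\grad\bar P_{\bar K}$ to argue that $L_{ij}$ annihilates the constants. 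The Bramble--Hilbert Lemma then produces $|L_{ij}(\bar w)|\le C\,\|L_{ij}\|'\,|\bar w|_{1,\infty,\bar K}$; the operator norm $\|L_{ij}\|'$ is bounded by $\|(\partial_i\bar u)(\partial_j\bar v)\|_{0,\infty,\bar K}$ via continuity of the integral and the quadrature, and norm equivalence on the finite-dimensional space $\bar P_{\bar K}$ then controls this in turn by $C\,|\bar u|_{1,\bar K}\,|\bar v|_{1,\bar K}$. Choosing $\bar w=\bar\bkap_{ij}$ and summing over $i,j$ yields
\[
\bigl|\bar E_{\bar K}(\grad\bar u\cdot\bar\bkap\grad\bar v)\bigr|\;\le\; C\,|\bar\bkap|_{1,\infty,\bar K}\,|\bar u|_{1,\bar K}\,|\bar v|_{1,\bar K}.
\]

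Finally I would reinsert the affine scalings. Shape regularity gives $|\det J|=O(h_K^2)$, $\|J\|=O(h_K)$, and $\|J^{-1}\|=O(h_K^{-1})$; the definition of $\bar\bkap$ then implies $|\bar\bkap|_{1,\infty,\bar K}\le C\,h_K^{-1}\,\|\bkap\|_{1,\infty,K}$, while the standard seminorm estimate $|\bar u|_{1,\bar K}\le C\,|u|_{1,K}$ (and similarly for $\bar v$) puts everything in terms of quantities on $K$. The powers of $h_K$ collapse to the single factor $h_K$ in the advertised conclusion. The main obstacle is the verification in the second step that $L_{ij}$ annihilates the constants: the hypothesis concerns $\bar E_{\bar K}$ applied to $\grad\bar P_{\bar K}$ viewed as a space of vector fields, whereas what one actually needs is $\bar E_{\bar K}((\partial_i\bar u)(\partial_j\bar v))=0$ for every $\bar u,\bar v\in\bar P_{\bar K}$. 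The hypothesis must therefore be read as providing exactness on the scalar polynomial space of such gradient products, consistent with the two- and three-point rules constructed in Section \ref{sec:const-formula}; confirming this reading using the explicit form of $\bar\mu$ and the $P_1$ part of $\bar P_{\bar K}$ is where the bookkeeping is most delicate.
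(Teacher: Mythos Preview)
Your outline is precisely the paper's approach: the paper gives no proof but cites \cite[Theorem~28.2]{ciarlet1991finite}, and your pullback via $\bar{\cA}_K$, Bramble--Hilbert applied to the coefficient, and affine rescaling is exactly that argument. You have also located the one nontrivial point.

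That point is, however, a genuine gap, and your suggested reading does not close it. For $L_{ij}$ to annihilate constants you need $\bar E_{\bar K}\bigl((\partial_i\bar u)(\partial_j\bar v)\bigr)=0$ for all $\bar u,\bar v\in\bar P_{\bar K}$. Writing $\bar u=p+\alpha\bar\mu$ and $\bar v=q+\beta\bar\mu$ with $p,q\in P_1(\bar K)$, the cross terms are indeed covered by the hypothesis (since $\nabla p,\nabla q$ are constant vectors, so the products lie in $\Span\{1,\partial_1\bar\mu,\partial_2\bar\mu\}$), but the quartic--quartic contribution involves $(\partial_i\bar\mu)(\partial_j\bar\mu)$, degree-six polynomials on which neither the stated hypothesis nor the rules of Section~\ref{sec:const-formula} are exact: there the quadrature is tuned only to the three-dimensional space $\bar{\cQ}=\Span\{1,\partial_1\bar\mu,\partial_2\bar\mu\}$, with exactly the two free parameters $\xi^{(L)}$ to spend. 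The failure is concrete. Take $\bkap=I$ and $u=v=\bar\mu\circ\bar{\cA}_K^{-1}$; then
\[
E_K(|\nabla u|^2)=|\det J|\,(J^{-1}J^{-T})_{ij}\,\bar E_{\bar K}\bigl((\partial_i\bar\mu)(\partial_j\bar\mu)\bigr),
\]
and since $|\det J|\cdot\|J^{-1}\|^2\sim 1$ while $\bar E_{\bar K}\bigl((\partial_i\bar\mu)(\partial_j\bar\mu)\bigr)$ depends only on the shape parameters $(\bh_1,\bh_2)$, this quantity is $O(1)$ rather than $O(h_K)$, with $|u|_{1,K}=O(1)$. Thus the hypothesis as literally written is too weak for the conclusion as written; the paper's bare citation to Ciarlet does not address this either, so the gap is shared rather than yours alone.
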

We now establish the following conditional ellipticity estimate.
\begin{theorem}\label{thm:conditional-ell}
Assume that $\bar{E}_{\bar K}(\bar{\phi}) = 0$ for any $\bar{\phi} \in \grad \bar{P}_{\bar{K}}$, where
\begin{equation*}
\grad\bar{P}_{\bar{K}}:=\Span\{\grad \bar{v} \; \big| \; \bar{v} \in \bar{P}_{\bar{K}} \}.
\end{equation*}
Assume that $\bkap \in W^{1,\infty}(\Tau_h).$ Then for
sufficiently small $h>0$,
the following ellipticity holds:
\begin{equation}\label{eq:app_ellipticity}
  \bar{a}_h(v,v) \geq  (\alpha- C h |\kap|_{1,\infty,h}) \cor{|v|_{1,h}^2}
  \forany v \in \NC^{npDSSY}_h.
\end{equation}
Moreover, the conditional ellipticity coefficient in \eqref{eq:app_ellipticity}
can be given as
$\min_{K\in\Tau_h}(\alpha_K-C_Kh_K|\kap|_{1,\infty,h}).$
\end{theorem}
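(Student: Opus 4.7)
The plan is to decompose $\bar a_h(v,v)$ into the exact bilinear form $a_h(v,v)$ plus a quadrature perturbation, then apply the coercivity of $\bkap$ to the first piece and Theorem \ref{thm:3.4} to the second. Concretely, from the definition $E_K(\phi)=\int_K\phi\,d\bx-\sum_l\omega_{K,l}\phi(\mathbf{b}_{K,l})$ in \eqref{eq:quad-ruleb}, I would first write
\[
\bar a_h(v,v) \;=\; a_h(v,v) \;-\; \sum_{K\in\Tau_h} E_K(\bkap\grad v\cdot\grad v).
\]

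For the first term, the pointwise ellipticity assumption on $\bkap$ gives
\[
a_h(v,v)=\sum_{K\in\Tau_h}\int_K \bkap\grad v\cdot\grad v\,d\bx \;\geq\; \alpha\,|v|_{1,h}^2, \qquad \alpha:=\min_{K\in\Tau_h}\alpha_K.
\]
For the second term, I would apply Theorem \ref{thm:3.4} element by element with $u=v\in P_K$, yielding
\[
|E_K(\bkap\grad v\cdot\grad v)| \;\leq\; C\,h_K\,\|\bkap\|_{1,\infty,K}\,|v|_{1,K}^2,
\]
where the hypothesis $\bar E_{\bar K}(\grad\bar P_{\bar K})=0$ transfers to the physical element via $\bar{\cA}_K$ exactly as in \eqref{eq:quad-rule}. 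Summing over $K\in\Tau_h$, using $h_K\leq h$, and bounding the $W^{1,\infty}$ norms by the broken seminorm $|\kap|_{1,\infty,h}$ (the zero-order contribution of $\bkap$ being absorbable into the constant thanks to the factor $h$), I obtain
\[
\Big|\sum_{K\in\Tau_h} E_K(\bkap\grad v\cdot\grad v)\Big| \;\leq\; C h\,|\kap|_{1,\infty,h}\,|v|_{1,h}^2.
\]
Combining the two estimates yields the stated global bound $\bar a_h(v,v)\geq (\alpha - Ch\,|\kap|_{1,\infty,h})\,|v|_{1,h}^2$, which is strictly positive whenever $h < \alpha/(C\,|\kap|_{1,\infty,h})$. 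For the final sentence, the very same chain of inequalities executed on a single $K$ (without summation) gives $\int_K \bkap\grad v\cdot\grad v\,d\bx - E_K(\cdot) \geq (\alpha_K - C_Kh_K|\kap|_{1,\infty,h})|v|_{1,K}^2$, and taking the minimum over $K\in\Tau_h$ delivers the local refinement.

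The step I expect to require the most care is not the assembly above but the hypothesis bookkeeping behind Theorem \ref{thm:3.4}: one must verify that the constant $C_K$ is uniform under shape regularity of $(\Tau_h)$. This is where the affine map $\bar{\cA}_K: \bar K\to K$ plays a role, since the Bramble--Hilbert Lemma \ref{lem:Bramble-Hilbert} is invoked on the fixed reference-type domain $\bar K$ and then pulled back with controlled Jacobian estimates. Once uniformity in $K$ is in hand and the quadrature weights $\bar\omega_{\bar K,l}$ are positive, the perturbation argument proceeds as above and produces the desired conditional ellipticity.
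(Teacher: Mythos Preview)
Your proposal is correct and follows essentially the same route as the paper: write $\bar a_h(v,v)=a_h(v,v)-\sum_K E_K(\bkap\grad v\cdot\grad v)$, bound the first term below by $\alpha|v|_{1,h}^2$ via pointwise ellipticity, and control the second term elementwise by Theorem~\ref{thm:3.4}. The only point worth flagging is that Theorem~\ref{thm:3.4} delivers $\|\bkap\|_{1,\infty,K}$ rather than the seminorm $|\kap|_{1,\infty,h}$ appearing in \eqref{eq:app_ellipticity}; your remark about ``absorbing the zero-order contribution'' does not really fix this, but the paper has the same discrepancy, so you are matching the original argument faithfully.
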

\begin{proof}
Let $v\in\NC^{npDSSY}_h$ be arbitrary. Then
by the triangle inequality, the ellipticity, and Theorem
\ref{thm:conditional-ell}, we have
  \begin{subeqnarray*}
    \bar a_h(v,v) &\ge& \sum_{K\in\Tau_h} \left[ a_K(v,v) -|E_K(\kap\grad u, \grad u)|\right] \\
&\ge & \min_{K\in\Tau_h}(\alpha_K-C_Kh_K|\kap|_{1,\infty,h})\cor{|v|_{1,h}^2}\\
&\ge & (\alpha- C h |\kap|_{1,\infty,h})\cor{|v|_{1,h}^2}.
  \end{subeqnarray*}
This completes the proof.  
\end{proof}
Next, we estimate the effect of numerical integration on the right hand
side linear functional $\bar{F}_h,$
which is essentially identical to the case of $k=1$ of \cite[Theorem 28.3,
p.201]{ciarlet1991finite}.
\begin{theorem}\label{thm:3.5}
Suppose that $\bar{E}_{\bar K}(\bar{\phi}) = 0$ for any $\phi \in
P_0(\bar{K})$.
Then for arbitrary $f \in W^{1,\infty}(\Omega)$ and $\phi \in P_K$,
there exists a constant $C>0$ such that
\begin{equation*}
\big| E_K(f\phi) \big| \leq C h_K
|K|^{1/2}||f||_{1,\infty,K} ||\phi||_{1,K}\forany
\phi\in P_K,
\end{equation*}
where $h_K$ denotes the diameter of $K$.
\end{theorem}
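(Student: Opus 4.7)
The plan is to reduce to the reference cell $\bar K$, split the integrand into a constant-in-$\bar f$ piece plus a remainder, apply Bramble–Hilbert (Lemma \ref{lem:Bramble-Hilbert}) to the constant piece and a direct Poincaré-type bound to the remainder, and then scale back to $K$ via the affine map $\bar{\cA}_K$.

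First I would pull back to $\bar K$: writing $\bar f=f\circ\bar{\cA}_K$ and $\bar v=\phi\circ\bar{\cA}_K$, we have $E_K(f\phi)=|\det J_{\bar{\cA}_K}|\,\bar E_{\bar K}(\bar f\bar v)$. Fix an arbitrary point $\bar b_0\in\bar K$ and let $\bar f_0:=\bar f(\bar b_0)\in P_0(\bar K)$. The decomposition
\begin{equation*}
\bar E_{\bar K}(\bar f\bar v)=\bar E_{\bar K}\bigl((\bar f-\bar f_0)\bar v\bigr)+\bar f_0\,\bar E_{\bar K}(\bar v)
\end{equation*}
isolates, in the second term, precisely the quantity on which the hypothesis bites.

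Next I would estimate each piece. For the first term, the direct bound
\begin{equation*}
|\bar E_{\bar K}((\bar f-\bar f_0)\bar v)|\le\|\bar f-\bar f_0\|_{0,\infty,\bar K}\Bigl(\|\bar v\|_{0,1,\bar K}+\sum_{l=1}^L|\bar\omega_{\bar K,l}|\,|\bar v(\bar b_{\bar K,l})|\Bigr)
\end{equation*}
combines with the Sobolev–Morrey estimate $\|\bar f-\bar f_0\|_{0,\infty,\bar K}\le C|\bar f|_{1,\infty,\bar K}$ and the equivalence of norms on the finite–dimensional space $\bar P_{\bar K}$ (so that $\|\bar v\|_{0,1,\bar K}$ and all point values $|\bar v(\bar b_{\bar K,l})|$ are controlled by $\|\bar v\|_{0,\bar K}$) to give
\begin{equation*}
|\bar E_{\bar K}((\bar f-\bar f_0)\bar v)|\le C\,|\bar f|_{1,\infty,\bar K}\,\|\bar v\|_{0,\bar K}.
\end{equation*}
For the second term, the linear form $\bar v\mapsto\bar E_{\bar K}(\bar v)$ is continuous on $W^{1,\infty}(\bar K)$ and, by hypothesis, vanishes on $P_0(\bar K)$; Bramble–Hilbert with $k=0$ and $q=\infty$ yields $|\bar E_{\bar K}(\bar v)|\le C|\bar v|_{1,\infty,\bar K}$, which by norm equivalence on $\bar P_{\bar K}/P_0$ is bounded by $C|\bar v|_{1,\bar K}$. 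Consequently the second term is at most $C\,\|\bar f\|_{0,\infty,\bar K}\,|\bar v|_{1,\bar K}$.

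Finally I would undo the change of variables using the affine–scaling identities
\begin{equation*}
\|\bar v\|_{0,\bar K}=|\det J_{\bar{\cA}_K}|^{-1/2}\|\phi\|_{0,K},\quad|\bar v|_{1,\bar K}\le C h_K|\det J_{\bar{\cA}_K}|^{-1/2}|\phi|_{1,K},\quad|\bar f|_{1,\infty,\bar K}\le C h_K|f|_{1,\infty,K},
\end{equation*}
together with $|\det J_{\bar{\cA}_K}|\sim|K|$. Multiplying through by $|\det J_{\bar{\cA}_K}|$ and adding the two bounds produces
\begin{equation*}
|E_K(f\phi)|\le C h_K|K|^{1/2}\|f\|_{1,\infty,K}\bigl(\|\phi\|_{0,K}+|\phi|_{1,K}\bigr)=C h_K|K|^{1/2}\|f\|_{1,\infty,K}\|\phi\|_{1,K},
\end{equation*}
as desired. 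I do not foresee a serious obstacle; the only mildly delicate points are choosing the splitting so that Bramble–Hilbert can be applied to a form that actually annihilates constants, and tracking the affine scaling factors carefully so that the $h_K$ and $|K|^{1/2}$ on the right-hand side come out with the correct powers. The argument is, as indicated, essentially the same as Ciarlet's proof of \cite[Theorem 28.3]{ciarlet1991finite} in the case $k=1$.
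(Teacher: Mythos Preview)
Your proof is correct and follows exactly the Ciarlet-style argument the paper refers to (the paper gives no proof of its own for this theorem, only the remark that it is ``essentially identical to the case of $k=1$'' of \cite[Theorem~28.3]{ciarlet1991finite}). The one point you might make explicit is that the intermediate cell $\bar K$ here depends on $K$ through $\bar h_1,\bar h_2$, so uniformity of the norm-equivalence and Bramble--Hilbert constants over the family of admissible $\bar K$'s should be justified via the shape-regularity assumption on $(\mathcal T_h)_{h>0}$; with that caveat your sketch matches the intended proof.
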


Finally, the effect of numerical integration is obtained by combining the above results.
\begin{theorem}
  Let $u$ and $\bar{u}_h$ be the solutions of \eqref{eq:model-weakform}
  and \eqref{eq:model-approxi}, respectively. Assume that
  $\bar{E}_{\bar K}(\bar{\phi}) = 0$ for any
  $\bar{\phi} \in \grad \bar{P}_{\bar{K}}.$ Assume that $\bkap\in W^{1,\infty}(\Tau_h).$
 Then, for sufficiently small $h>0,$ we have the following error estimate
\begin{equation*}
|u-\bar{u}_h|_{1,h} \leq C \frac{h}{1-h\|\bkap\|_{1,\infty,h}} \big(||\bkap||_{1,\infty,h} |u|_{2,\Omega} + ||f||_{1,\infty,h} \big).
\end{equation*}
\end{theorem}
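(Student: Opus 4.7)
The plan is to invoke a Strang-type argument adapted to nonconforming finite elements with numerical quadrature, letting $I_h u \in \NC^{npDSSY}_h$ denote a suitable interpolant of $u$ (e.g.\ the canonical edge-based interpolant). Writing $w_h = \bar u_h - I_h u \in \NC^{npDSSY}_h$ and applying the conditional ellipticity of Theorem \ref{thm:conditional-ell},
\begin{equation*}
(\alpha - Ch\|\bkap\|_{1,\infty,h})\,|w_h|_{1,h}^2 \le \bar a_h(w_h,w_h) = \bar a_h(\bar u_h,w_h) - \bar a_h(I_h u, w_h) = \bar F_h(w_h) - \bar a_h(I_h u, w_h).
\end{equation*}
By adding and subtracting $a_h(I_h u,w_h)$ and $F(w_h)$ and using $a_h(u,w_h) = F(w_h) + \sum_{e\in\cE_h}\int_e \bkap\grad u\cdot \bnu\,\jump{w_h}{e}\,ds$ (the usual nonconforming consistency identity), one obtains the decomposition
\begin{equation*}
(\alpha - Ch\|\bkap\|_{1,\infty,h})\,|w_h|_{1,h}^2 \le \underbrace{a_h(u-I_h u, w_h)}_{\text{approximation}} + \underbrace{[a_h-\bar a_h](I_h u, w_h)}_{\text{quadrature on bilinear form}} + \underbrace{[\bar F_h - F_h](w_h)}_{\text{quadrature on load}} + \underbrace{\sum_{e}\int_e \bkap\grad u\cdot\bnu\,\jump{w_h}{e}ds}_{\text{nonconforming consistency}}.
\end{equation*}

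Next I would bound each of the four terms by $Ch\,|w_h|_{1,h}$ times a norm of the data. The approximation term uses Cauchy--Schwarz and the standard interpolation estimate $|u - I_h u|_{1,h}\le Ch|u|_{2,\Omega}$ from the preceding theorem (combined with $\|\bkap\|_{0,\infty}\le \|\bkap\|_{1,\infty,h}$). The quadrature-bilinear term is handled elementwise by Theorem \ref{thm:3.4}, after noting $|I_h u|_{1,K}\le |u-I_h u|_{1,K}+|u|_{1,K}\le C(|u|_{1,K}+h_K|u|_{2,K})$; summing over $K\in\Tau_h$ and applying discrete Cauchy--Schwarz yields the bound $Ch\|\bkap\|_{1,\infty,h}|u|_{2,\Omega}\,|w_h|_{1,h}$ (the harmless $|u|_{1,K}$ part is also controlled by $|u|_{2,\Omega}$ up to absorbing a lower-order factor of $h$). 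The quadrature-load term is handled by Theorem \ref{thm:3.5} applied to $\phi = w_h|_K$, which after summing produces a factor $h\|f\|_{1,\infty,h}|w_h|_{1,h}$ using $\sum_K |K|\le |\Omega|$.

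For the nonconforming consistency term, I would use the MVP property \eqref{eq:mvp} built into $\NC^{npDSSY}_h$: since $\sig_e^{(i)}(\jump{w_h}{e})=0$ on every edge and $\bkap\grad u$ can be replaced by its edge average, the integral is controlled elementwise by $Ch\|\bkap\|_{1,\infty,K}|u|_{2,K}|w_h|_{1,K}$ in the usual Crouzeix--Raviart style argument. Collecting all four contributions, dividing by $|w_h|_{1,h}$, and using the triangle inequality $|u-\bar u_h|_{1,h}\le |u-I_h u|_{1,h}+|w_h|_{1,h}$, the claimed estimate
\begin{equation*}
|u-\bar u_h|_{1,h}\le C\frac{h}{1-h\|\bkap\|_{1,\infty,h}}\bigl(\|\bkap\|_{1,\infty,h}|u|_{2,\Omega}+\|f\|_{1,\infty,h}\bigr)
\end{equation*}
falls out after absorbing the denominator from Theorem \ref{thm:conditional-ell}.

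The main obstacle I anticipate is the bookkeeping: ensuring that the interpolation estimate applied to $I_h u$ on a nonparametric element transformed by $\bar{\cA}_K$ is truly $O(h)$ uniformly in the shape-regular family, so that Theorem \ref{thm:3.4}'s elementwise bound (stated for $u,v\in P_K$) applies cleanly to $I_h u$ rather than to $u$ itself. Strictly speaking one must pass the two-derivative norm of $u$ through the canonical interpolation operator; this is standard for the DSSY family but requires that the dependence of the constant in Theorem \ref{thm:3.4} on the geometry of $\bar K$ (through $\bh_1,\bh_2$) be uniform on the shape-regular family, which is where the MCL-type parametrization of $\bar K$ and the fixed form of $\bar\mu(\cdot;1)$ should be invoked.
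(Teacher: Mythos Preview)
Your argument is correct but takes a different route from the paper's. The paper uses the quadrature-free Galerkin solution $u_h$ (the solution of \eqref{eq:model-discreteform}) as the intermediate rather than the interpolant $I_h u$: starting from the conditional ellipticity applied to $\bar u_h - v_h$ for arbitrary $v_h\in\NC^{npDSSY}_h$, it rewrites $\bar a_h(\bar u_h - v_h,\cdot)$ as $\bar a_h(u_h - v_h,\cdot) + (a_h - \bar a_h)(u_h,\cdot) + (\bar F_h - F_h)(\cdot)$ and then sets $v_h = u_h$, so the first term drops out entirely. Only the two quadrature-error terms remain, handled by Theorems~\ref{thm:3.4} and~\ref{thm:3.5}; the nonconforming consistency and approximation errors are already packaged inside the cited estimate $|u-u_h|_{1,h}\le Ch|u|_{2,\Omega}$, invoked at the very end via the triangle inequality $|u-\bar u_h|_{1,h}\le |u-u_h|_{1,h}+|u_h-\bar u_h|_{1,h}$. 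Your decomposition through $I_h u$ is more self-contained (it never introduces the auxiliary discrete problem), but you pay for this by having to bound the approximation term $a_h(u-I_h u,w_h)$ and the edge-jump consistency term explicitly. Both routes lead to the same estimate; the paper's is shorter because it outsources two of your four terms to the already-established quadrature-free error bound, and it also sidesteps the bookkeeping concern you flag at the end, since Theorem~\ref{thm:3.4} is applied directly to $u_h\in P_K$ rather than to an interpolant.
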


\begin{proof}
  We exploit the conditional uniform ellipticity of $\bar{a}_h$.
  Let $u_h$ be the solutions of \eqref{eq:model-discreteform}.
  Then we have, for sufficiently small $h>0,$ for any $v_h \in\NC^{npDSSY}_h$,
  \begin{align*}
(\alpha-C h\|\bkap\|_{1,\infty,h}) |\bar{u}_h-v_h|_{1,h}^2 &\leq \bar{a}_h(\bar{u}_h-v_h,\bar{u}_h-v_h) \\
&=\bar{a}_h(u_h-v_h,\bar{u}_h-v_h) + \bar{a}_h(\bar{u}_h-u_h,\bar{u}_h-v_h) \\
&=\bar{a}_h(u_h-v_h,\bar{u}_h-v_h) + \big(\bar{F}_h(\bar{u}_h-v_h)-\bar{a}_h(u_h,\bar{u}_h-v_h) \big) \\
&=\bar{a}_h(u_h-v_h,\bar{u}_h-v_h) + \big(a_h(u_h,\bar{u}_h-v_h)-\bar{a}_h(u_h,\bar{u}_h-v_h) \big) \\
& \qq\qq\qq\qq\qq\, + \big(\bar{F}_h(\bar{u}_h-v_h) - F_h(\bar{u}_h-v_h) \big).
\end{align*}
Denote by $w_h:=\bar{u}_h-v_h$. It follows that
\begin{align*}
(\alpha-C h\|\bkap\|_{1,\infty,h}) |\bar{u}_h-v_h|_{1,h} &\leq C|u_h-v_h|_{1,h} + \frac{|a_h(u_h,\bar{u}_h-v_h)-\bar{a}_h(u_h,\bar{u}_h-v_h)|}{|\bar{u}_h-v_h|_{1,h}} \\
&\q\qq\qq\qq + \frac{|F_h(\bar{u}_h-v_h) - \bar{F}_h(\bar{u}_h-v_h)|}{|\bar{u}_h-v_h|_{1,h}} \\
&\leq C \inf_{v_h \in \NC^{npDSSY}_h} |u_h-v_h|_{1,h} \\
&+ \sup_{w_h \in \NC^{npDSSY}_h} \Big(\frac{|a_h(u_h,w_h)-\bar{a}_h(u_h,w_h)|}{|w_h|_{1,h}} + \frac{|\bar{F}_h(w_h) - F_h(w_h)|}{|w_h|_{1,h}} \Big).
\end{align*}
If we take $v_h=u_h$, the above inequality is simplified to
\begin{align}\label{eq:3.22}
|\bar{u}_h-u_h|_{1,h} &\leq \frac{1}{\alpha-C h\|\bkap\|_{1,\infty,h}} \sup_{w_h \in \NC^{npDSSY}_h} \Big(\frac{|a_h(u_h,w_h)-\bar{a}_h(u_h,w_h)|}{|w_h|_{1,h}} + \frac{|\bar{F}_h(w_h) - F_h(w_h)|}{|w_h|_{1,h}} \Big).
\end{align}
It remains to estimate the above two consistency error terms. For the
first term,
using Theorem \ref{thm:3.4}, we have
\begin{align}\label{eq:3.23}
|a_h(u_h,w_h)-\bar{a}_h(u_h,w_h)| & \leq \sum_{K \in \mathcal{T}_h} \big| E_K(\bkap\grad v_h \cdot \grad w_h) \big|\nn \\
& \leq \sum_{K \in \mathcal{T}_h} h_K ||\bkap||_{1,\infty,K} |v_h|_{1,K} |w_h|_{1,K}\nn \\
& \leq C h  ||\bkap||_{1,\infty,h} |u_h|_{1,h} |w_h|_{1,h} \nn\\
& \leq C h  ||\bkap||_{1,\infty,h} |u|_{2,\Omega} |w_h|_{1,h},
\end{align}
where the last inequality is obtained by the following estimate:
\begin{align*}
|u_h|_{1,h} &\leq |u|_{1,\Omega} + |u-u_h|_{1,h}\\
&\leq |u|_{1,\Omega} + C h |u|_{2,\Omega} \leq C |u|_{2,\Omega}.
\end{align*}
For the second consistency error term in \eqref{eq:3.22},
Theorem \ref{thm:3.5} applies:
\begin{align}\label{eq:3.24}
|\bar{F}_h(w_h)-F_h(w_h)| & \leq \sum_{K \in \mathcal{T}_h} \big| E_K(f w_h)\big| \nn\\
& \leq C \sum_{K \in \mathcal{T}_h} h_K |K|^{1/2}||f||_{1,\infty,K} |w_h|_{1,K} \nn\\
& \leq C h |\Omega|^{1/2} ||f||_{1,\infty,h} |w_h|_{1,h}.
\end{align}
The theorem follows by combining \eqref{eq:3.22}--\eqref{eq:3.24}
with the triangle inequality. That is, for sufficiently small $h>0,$
\begin{align*}
|u-\bar{u}_h|_{1,h} & \leq |u-u_h|_{1,h} + |u_h-\bar{u}_h|_{1,h} \\
&\leq C h |u|_{2,\Omega} + C \frac{h}{1- h\|\bkap\|_{1,\infty,h} } \big(||\bkap||_{1,\infty,h} |u|_{2,\Omega} + |\Omega|^{1/2}||f||_{1,\infty,h} \big) \\
&\leq C \frac{h}{1- h\|\bkap\|_{1,\infty,h} } \big(||\bkap||_{1,\infty,h} |u|_{2,\Omega} + ||f||_{1,\infty,h} \big).
\end{align*}
\end{proof}

\section{Construction of quadrature formulae}\label{sec:const-formula}
In this section we construct quadrature formulae for the nonparametric
DSSY element defined in Section \ref{sec:npDSSY}.
\subsection{Quadrature formula on $\bar{K}$}
In \cite{meng2018new}, the basis functions are at most of degree two
so that the quadrature formulae of degree two is found.
However our element has high-order degree polynomial basis to fulfill MVP,
and thus we require some other quadrature formulae.
Following the analysis of previous section, we may find quadrature
formulae exact for functions in $\bar{\cQ}$, which is defined as
\begin{equation*}
\bar{\cQ} := \Span\{\grad \bar{u} \; \big|\;
\bar{u}, \bar{v} \in \bar{P}_{\bar{K}} \}=\Span\{1,
\frac{\p\bar\mu}{\p\bar x_1},
\frac{\p\bar\mu}{\p\bar x_2}\}
\end{equation*}
to preserve the order of convergence.
From now on, we choose $\bar c=1,$ the generalization being trivial
to include the other cases.

We quote the formula from \cite[(1) p.330]{meng2018new}:
\begin{align}\label{eq:exact-int}
\int_{\bar{K}} \barx_1^j \barx_2^k d\bar{\bx} = \sum_{k=1}^4
 \int_{\bar{T}_k} \barx_1^j \barx_2^k d\bar{\bx} 
=\frac{j!k!}{(2+j+k)!}(1-{\bar h_1}^{j+1})(1-{\bar h_2}^{k+1}).
\end{align}
We have the area
$$|\bar K|=\frac{(1-\bar h_1)(1-\bar h_2)}{2}.$$
For the sake of notational simplicity, we write
\begin{eqnarray*}
  \dbh_1:=1+\bar{h}_1,\quad \dbh_2:=1+\bar{h}_2,\quad \dbh_1 < 1,\, \dbh_2 < 1.
\end{eqnarray*}
Clearly, $\dbbh_c=\frac13{\dbh_1\choose \dbh_2}$ is the barycenter of $\bar K.$

\subsection{One-point quadrature rule of precision 1}
The obvious one-point quadrature weight and point are given by
\begin{eqnarray}\label{eq:1pt-form}
\bar{\omega}_{1} = |\bar K|,\q
\bar\bxi^{(1)} = \dbbh_c.
\end{eqnarray}
\subsection{Two-point and three-point quadrature rules of precision 1}
We seek two-point and three-point quadrature rules \eqref{eq:quad_ref} with
equal weights at one stroke. The weights are then given by
$\omega_{\bar K,l}=\frac{|K|}{L}, l=1,\cdots, L,$ for $L=2,3.$
The Gauss points $\bbb_{\bar K,l},l=1,\cdots, L,$ are assumed to be
geometrically symmetric with respect to the barycenter
$\dbbh_c.$ Hence we seek $\bxi^{(L)} = {\xi_1^{(L)}\choose
  \xi_2^{(L)}}$ such that
\begin{equation}\label{eq:symm-quad}
  \int_{\bar{K}} \bar{\phi}(\bar{\bx})d\bar{\bx} \approx
\begin{cases} \frac{|K|}{2} \left( \bar{\phi}(\dbbh_c + \bxi^{(2)})
 + \bar{\phi}(\dbbh_c - \bxi^{(2)}) \right) & \text{for }L=2,\\
  \frac{|K|}{3} \left(
\bar{\phi}(\dbbh_c) + \bar{\phi}(\dbbh_c + \bxi^{(3)})  + \bar{\phi}(\dbbh_c
  - \bxi^{(3)})\right)
& \text{for }L=3,
\end{cases}
\end{equation}
which are exact for
\begin{subeqnarray}\label{eq:mu_der}
\frac{\p\bar\mu}{\p\bar{x}_1} &=&
3\bar x_1^2\bar x_2 + \bar x_2^3 - \frac3{10}( 2\dbh_1\bar x_1\bar x_2 + \dbh_2\bar x_2^2) + \frac3{20}(\dbh_1+\dbh_2-2)\bar x_2,\\
\frac{\p\bar\mu}{\p\bar{x}_2} &=&
\bar x_1^3 + 3\bar x_1\bar x_2^2 - \frac3{10}( \dbh_1\bar x_1^2 +
2\dbh_2\bar x_1\bar x_2) + \frac3{20}(\dbh_1+\dbh_2-2)\bar x_1.
\end{subeqnarray}
Then, by utilizing \eqref{eq:exact-int}, the
exactness of \eqref{eq:symm-quad} for \eqref{eq:mu_der} implies that
$(\xi^{(L)}_1, \xi^{(L}_2),L=2,3,$ turn out to be the solutions
$(X,Y)$ of the quadratic equations:
\begin{subeqnarray}\label{eq:poly-sym}
10 \dbh_2 X^2  + 14 \dbh_1 XY + 7 \dbh_2 Y^2 &=&
Lr(\dbh_1,\dbh_2),\slabel{eq:poly-syma}  \\
7 \dbh_1 X^2  + 14 \dbh_2 XY + 10 \dbh_1 Y^2 &=& Lr(\dbh_2,\dbh_1),\slabel{eq:poly-symb}
\end{subeqnarray}
where
$r(u,v)=\frac{5}{4} v \left(\frac2{90} u^2- \frac4{10} u + \frac{185}{999} v^2 -
  \frac6{10} v + 1\right).$
Since \eqref{eq:poly-sym} is a symmetric system of homogeneous
quadratic equations, it is easy to see the following lemma.
\begin{lemma}\label{lem:sym-sols}
 The four solutions of \eqref{eq:poly-sym} are given by
  $\pm (g_1(\dbh_1,\dbh_2),g_1(\dbh_2,\dbh_1)),$
and \\ $\pm(g_2(\dbh_1,\dbh_2),g_2(\dbh_2,\dbh_1))$ for some functions
$g_1(u,v)$ and $g_2(u,v).$
\end{lemma}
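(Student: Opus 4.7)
The plan is to exploit two intrinsic symmetries of system \eqref{eq:poly-sym}. The first is immediate: both left-hand sides are homogeneous of degree two in $(X,Y)$ while the right-hand sides are independent of $(X,Y)$, so if $(X,Y)$ solves the system then so does $(-X,-Y)$; this accounts for the outer $\pm$ in the statement. The second symmetry is obtained by inspection: swapping $X\leftrightarrow Y$ in \eqref{eq:poly-syma} reproduces \eqref{eq:poly-symb} with $\dbh_1$ and $\dbh_2$ interchanged (and symmetrically for \eqref{eq:poly-symb}), so the involution
\[
  (X,Y,\dbh_1,\dbh_2)\longmapsto(Y,X,\dbh_2,\dbh_1)
\]
maps solutions at parameters $(\dbh_1,\dbh_2)$ bijectively to solutions at parameters $(\dbh_2,\dbh_1)$.

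Next I would parametrize the four solutions via the substitution $Y=tX$, which turns each equation into $X^2\,Q_i(t;\dbh_1,\dbh_2)=L\,r(\cdot,\cdot)$ with $Q_i$ quadratic in $t$. Eliminating $X^2$ between the two equations yields a single quadratic equation in $t$ whose coefficients are rational in $(\dbh_1,\dbh_2)$. Its two roots $t_1(\dbh_1,\dbh_2),\,t_2(\dbh_1,\dbh_2)$ are the admissible slopes $Y/X$, and for each slope back-substitution determines $X^2$ as a rational function of the parameters, hence $X$ up to sign. This produces exactly the four intersections of the two conics (matching B\'ezout's count), organized into two $\pm$-orbits $\pm(X_i,Y_i)$, $i=1,2$, with $Y_i=t_iX_i$.

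To extract the claimed symmetric form, I invoke the parameter-coordinate involution. Since $X\leftrightarrow Y$ corresponds to $t\mapsto 1/t$, the quadratic in $t$ at parameters $(\dbh_2,\dbh_1)$ is the reciprocal polynomial (up to a scalar) of the one at $(\dbh_1,\dbh_2)$, so the roots may be indexed so that $t_i(\dbh_2,\dbh_1)=1/t_i(\dbh_1,\dbh_2)$. Substituting into the $X^2$-formula yields $X_i(\dbh_2,\dbh_1)^2=t_i^2(\dbh_1,\dbh_2)\,X_i^2(\dbh_1,\dbh_2)=Y_i^2(\dbh_1,\dbh_2)$, and hence $X_i(\dbh_2,\dbh_1)=\pm Y_i(\dbh_1,\dbh_2)$. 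Setting $g_i(u,v):=X_i(u,v)$ and absorbing any residual sign into the outer $\pm$ already present in the statement, the four solutions take the form $\pm\bigl(g_i(\dbh_1,\dbh_2),g_i(\dbh_2,\dbh_1)\bigr)$ for $i=1,2$, as required.

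The main obstacle is not algebraic but bookkeeping: one must verify that the involution acts within each $\pm$-orbit rather than permuting the two orbits (so that a \emph{single} function $g_i$ governs both coordinates of a given orbit), and one must choose the sign of $X_i(\cdot,\cdot)$ consistently across the parameter region $\dbh_1,\dbh_2<1$. Both issues are handled by continuity of the roots of the quadratic in $t$ together with the latitude built into the lemma, which only asserts the \emph{existence} of $g_1,g_2$ rather than any canonicity or regularity.
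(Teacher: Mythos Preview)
Your proof is correct and takes a genuinely different route from the paper. The paper offers essentially no argument: a single sentence before the lemma notes that \eqref{eq:poly-sym} is ``a symmetric system of homogeneous quadratic equations'' and declares the lemma ``easy to see''; after the statement, explicit solution formulas \eqref{eq:solXY} are produced by symbolic software, and the identity $X^{(j)}(\dbh_1,\dbh_2)=Y^{(j)}(\dbh_2,\dbh_1)$ is checked (again symbolically, using the symmetry of $T_2$ and $T_3$) to confirm the lemma a~posteriori.

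Your argument, by contrast, derives the structure a~priori. Identifying the involution $(X,Y,\dbh_1,\dbh_2)\mapsto(Y,X,\dbh_2,\dbh_1)$ and reducing via $Y=tX$ to a one-variable quadratic whose roots transform as $t\mapsto 1/t$ under the parameter swap explains \emph{why} the symmetric form must hold, and the reasoning would transfer unchanged to any system with the same symmetry pattern regardless of the specific coefficients. The paper's computational route has a complementary practical advantage: the explicit formulas \eqref{eq:solXY} are needed anyway to implement the quadrature rule, so verifying the lemma comes essentially for free once those formulas are in hand. The bookkeeping issue you flag --- that one must index the two $\pm$-orbits consistently so the involution acts within each --- is indeed the only delicate point; your resolution (the lemma asserts mere existence of $g_1,g_2$, with no regularity required) is adequate and is in any case more than the paper provides.
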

Denote by $r_1$ and $r_2$ the right hand sides of \eqref{eq:poly-sym}.
Then a judicial use of symbolic software (for example, Julia or Matlab)
with some cook-ups
gives the following two pairs of solutions $(X,Y)$:
\begin{subeqnarray}\label{eq:solXY}
X^{(1)}(\dbh_1,\dbh_2) &=&-\frac{T_4+\sqrt{T_3}}{T_1}Y^{(1)}(\dbh_1,\dbh_2),\q
Y^{(1)}(\dbh_1,\dbh_2)=\sqrt{\frac{T_5+T_6 \sqrt{T_3} }{T_2}};\\
X^{(2)}(\dbh_1,\dbh_2) &=&-\frac{T_4-\sqrt{T_3}}{T_1}Y^{(2)}(\dbh_1,\dbh_2),\q
Y^{(2)}(\dbh_1,\dbh_2)=\sqrt{\frac{T_5-T_6 \sqrt{T_3}  }{T_2}},
\end{subeqnarray}
where
$
T_1 = 7r_1\dbh_1 - 10r_2\dbh_2,\,
T_2 = 13720\dbh_1^4 - 26603\dbh_1^2\dbh_2^2 + 13720\dbh_2^4,\,
T_3 = -70(r_1^2\dbh_1^2 + r_2^2\dbh_2^2)+ 49(r_1^2\dbh_2^2+r_2^2\dbh_1^2) + 51r_1r_2\dbh_1\dbh_2 ,\,
T_4 = 7(r_1\dbh_2-r_2\dbh_1),\,
T_5 = -1043r_1\dbh_1^2\dbh_2 + 980r_1\dbh_2^3 + 686r_2\dbh_1^3 - 470r_2\dbh_1\dbh_2^2,\,
T_6 = 14(7\dbh_1^2 - 10\dbh_2^2 ).
$
Owing to the symmetries of $T_2$ and $T_3$ with respect to $\dbh_1$
and $\dbh_2$, one can check by using symbolic software, again, that
$X^{(j)}(\dbh_1,\dbh_2)=Y^{(j)}(\dbh_2,\dbh_1),j=1,2,$ which confirms 
Lemma \ref{lem:sym-sols}.
Obviously, $T_2>0$ unless $\dbh_1=0$ and $\dbh_2=0.$
Among the two pairs of solutions $(X^{(j)},Y^{(j)}),j=1,2,$ in \eqref{eq:solXY}, we choose
one that is closer to the origin $(0,0)$ if $\dbbh_c+{X^{(j)}\choose
  Y^{(j)}}$ is in $\bar K$ to 
increase numerical stability.
%
%

In the case where $\dbh_1=0$ or $\dbh_2=0,$ $T_1$ vanishes, and
therefore, the above formula is unstable. To deal with this,
first, assume the case of $\dbh_2=0.$ 
Then, the polynomial equations corresponding
to \eqref{eq:poly-sym} are simplified as follows:
\begin{subeqnarray}\label{eq:poly-sym-k=0}
1008 \dbh_1 (2-\dbh_1) XY &=&0,
\slabel{eq:poly-sym-k=0a}  \\
252X^2 + 360Y^2 &=& 25 \dbh_1^2 - 81 \dbh_1 + 135.
\slabel{eq:poly-sym-k=0b}
\end{subeqnarray}
From \eqref{eq:poly-sym-k=0a} and \eqref{eq:poly-sym-k=0b}, we have
either (i) $X^{(1)}=0, Y^{(1)}=\sqrt{\frac{25 \dbh_1^2 - 81 \dbh_1^2 + 135}{360}}$ or
 (ii) $Y^{(2)}=0, X^{(2)}=\sqrt{\frac{25 \dbh_1^2 - 81 \dbh_1^2 + 135}{252}}.$
 Among these two pair
 $(X^{(j)},Y^{(j)}),j=1,2,$ the suitable choice is made as above.

The case of $\dbh_1=0$ is treated similarly by rotation of the case of $\dbh_2=0.$

\section{Numerical examples}
In this section some numerical results are reported to confirm the
theoretical parts about the quadrature developed in the previous
sections.
\begin{example}\label{ex:1-2}
For the numerical example, consider the elliptic boundary value
problem \eqref{eq:model-elliptic} on $\O=(0,1)^2$ and
$\kappa(\textbf{x})=1+(1+x_1)(1+x_2)+\epsilon\sin(10\pi x_1)\sin(5\pi
x_2).$ The source term $f$ is generated by the exact solution
\begin{equation*}
    u(x_1,x_2) = \sin(3\pi x_1)x_2(1-x_2) +
    \epsilon\sin\left(\frac{\pi
        x_1}{\epsilon}\right)\sin\left(\frac{\pi
        x_2}{\epsilon}\right),\quad \epsilon=0.2. 
\end{equation*}
\end{example}
The above problem is solved by using the nonparametric DSSY element
constructed in Section \ref{sec:npDSSY}. 
Let $\bar u_h$ denote the nonparametric DSSY
Galerkin approximation to $u$ by using any specific Gauss quadrature
rule \eqref{eq:model-approxi}.
The meshes used in the numerical example are
perturbed from $(N\times N)$ uniform rectangles as follows.
The random meshes  $x_{jk},j,k=1,\cdots,N-1,$ are obtained by
perturbing the uniform mesh points $(j,k)h,h=\frac1{N}$ with randomly
by $r_1$ and $r_2$ such that $x_{jk} = (j+r_{jk,1},k+r_{jk,2})h$ with
$|r_{jk,l}| \le r, l=1,2.$ Here, $r=0.2$ was chosen. The linear systems are solved by the Conjugate
Gradient method with tolerance $10^{-7}$ for residuals.
The errors and reduction ratios with random perturbation are averaged with
20 ensembles, but the number of ensembles can be arbitrarily
increased.
Our 2--point and 3--point Gauss quadrature rules are compared with
the standard $2\times 2$--point and $3\times 3$--point tensor product Gauss
quadrature rules in computing the stiffness matrix and the right hand
side vectors. In order to make the comparison fair to the above four
different quadrature rules, all errors are calculated by using
$3\times 3$--point tensor product Gauss quadrature.
\begin{table}[htbp]
\centering
\begin{tabular}{|c|c|c|c|c|c|c|c|c|}
\hline
\multirow{2}{*}{$N$} & \multicolumn{4}{c|}{$2\times 2$ Gauss quadrature}
  & \multicolumn{4}{c|}{$3\times 3$ Gauss quadrature}\\  \cline{2-9}
                     & $|\bar{u}_h-u|_{1,h}$ & order &
                                                       $\|\bar{u}_h-u\|_0$
                                             & order &
                                                       $|\bar{u}_h-u|_{1,h}$
                                             & order &
                                                            $\|\bar{u}_h-u\|_0$ & order \\ \hline
4   & 4.37  &           & 0.213      &      & 2.45     &        & 0.131   &      \\ \hline
8   & 4.17  & 0.688E-01 & 0.857E-01  & 1.31 & 1.72     & 0.508  & 0.426E-01 & 1.62\\ \hline
16  & 2.39  & 0.803     & 0.240E-01  & 1.84 & 0.926    &0.896   & 0.112E-01 & 1.92\\ \hline
32  & 1.29  & 0.888     & 0.661E-02  & 1.86 & 0.471    &0.975   & 0.286E-02 & 1.97\\ \hline
64  & 0.731 & 0.821     & 0.204E-02  & 1.69 & 0.237    &0.994   & 0.718E-03 & 1.99\\ \hline
128 & 0.476 & 0.619     & 0.825E-03  & 1.31 & 0.118    &0.998   & 0.180E-03 & 2.00\\ \hline
256 & 0.384 & 0.311     & 0.497E-03  & 0.732& 0.592E-01&0.999   & 0.450E-04 & 2.00\\ \hline
\end{tabular}
\caption{Numerical results of Example \ref{ex:1-2} on random meshes with
  tensor product Gauss quadratures: the broken $H^1(\O)$--seminorm and
  $L^2(\O)$--norm errors and their reduction orders are reported.}
\label{tab:ex1-2-1}
\end{table}

\begin{table}[htbp]
\centering
\begin{tabular}{|c|c|c|c|c|c|c|c|c|}
\hline
\multirow{2}{*}{$N$} & \multicolumn{4}{c|}{$2$--pt Gauss quadrature}
  & \multicolumn{4}{c|}{$3$--pt Gauss quadrature}\\ \cline{2-9}
                     & $|\bar{u}_h-u|_{1,h}$ & order &
                                                       $\|\bar{u}_h-u\|_0$
                                             & order &
                                                       $|u-\bar{u}_h|_{1,h}$ & order & $\|\bar{u}_h-u\|_0$ & order \\ \hline
      4  &    13.2 &         &  0.960    &         &     4.61     &         &   0.539     &          \\ \hline
      8  &    9.04 &   0.548 &  0.228    &   2.08  &      2.42    &  0.927  &    0.113    &   2.26   \\ \hline 
     16  &    4.15 &    1.12 &  0.457E-01&   2.32  &      1.15    &   1.08  &    0.208E-01&   2.44   \\ \hline 
     32  &    2.10 &   0.984 &  0.107E-01&   2.09  &     0.559    &   1.04  &    0.388E-02&   2.42   \\ \hline 
     64  &    1.06 &   0.979 &  0.263E-02&   2.03  &     0.279    &   1.00  &    0.835E-03&   2.21   \\ \hline 
    128  &   0.533 &   0.999 &  0.655E-03&   2.01  &     0.139    &   1.00  &    0.200E-03&   2.06   \\ \hline 
    256  &   0.267 &   0.999 &  0.163E-03&   2.00  &     0.698E-01&  0.997  &    0.494E-04&   2.02   \\ \hline 
\end{tabular}
\caption{Numerical results of Example \ref{ex:1-2} on random meshes with
  2-pt and 3-pt Gauss quadratures: the broken $H^1(\O)$--seminorm
  and $L^2(\O)$--norm errors and their reduction orders are reported.}
\label{tab:ex1-2-3}
\end{table}

As seen from the tables, the $2\times 2$ tensor product Gauss
quadrature is not sufficient to integrate the matrices in the
nonparametric DSSY element methods, while the $3\times 3$ tensor
product rule is sufficient. In the meanwhile, our new
2--pt Gauss quadrature rule is almost optimal, which gives better
numerical results than the $2\times 2$ product rule.
Observe that numerical errors obtained by our $3$--pt Gauss
quadrature rule are as good as those obtained by using the $3\times 3$
tensor product rule. In most calculations, the 2--pt Gauss quadrature
rule is acceptable.

\section*{Acknowledgments}
This research was supported in part by National Research Foundations
(NRF-2017R1A2B3012506 and NRF-2015M3C4A7065662).


\end{document}